\documentclass[10pt,notitlepage,twoside,a4paper]{amsart}
 \usepackage{amsfonts}

\usepackage{amsmath,amssymb,enumerate}

\usepackage{epsfig,fancyhdr,color}

\usepackage{amssymb}
\usepackage{amsmath,amsthm}
\usepackage{latexsym}
\usepackage{amscd}
\usepackage{psfrag}
\usepackage{graphicx}
\usepackage[latin1]{inputenc}
\usepackage[all]{xy}
\usepackage[mathcal]{eucal}

\definecolor{NoteColor}{rgb}{1,0,0}


\renewcommand{\textsc}{\textcolor{red}}

%


\newtheorem{theorem}{\rm\bf Theorem}[section]
\newtheorem{proposition}[theorem]{\rm\bf Proposition}
\newtheorem{lemma}[theorem]{\rm\bf Lemma}
\newtheorem{corollary}[theorem]{\rm\bf Corollary}
\newtheorem*{theorem 1}{\rm\bf Proposition 1}
\newtheorem*{theorem 2}{\rm\bf Proposition 2}

\theoremstyle{definition}
\newtheorem{definition}[theorem]{\rm\bf Definition}

\theoremstyle{remark}

\begin{document}

\title[Homeomorphisms of the space of geodesic laminations]{\textbf{On the homeomorphisms of the space of geodesic laminations  on a hyperbolic surface}}

\author{C. Charitos}
\address{Charalampos Charitos,  Laboratory of Mathematics, Agricultural University of Athens, Iera Odos 75, 
 118 55 Athens, Greece}
\email{bakis@aua.gr}

\author{I. Papadoperakis}
\address{Ioannis Papadoperakis,  Laboratory of Mathematics, Agricultural University of Athens, Iera Odos 75, 
 118 55 Athens, Greece}
\email{papadoperakis@aua.gr}

\author{A. Papadopoulos}
\address{Athanase Papadopoulos,  Universit{\'e} de Strasbourg and CNRS,
7 rue Ren\'e Descartes,
 67084 Strasbourg Cedex, France}
\email{athanase.papadopoulos@math.unistra.fr}

\maketitle
\begin{abstract}
 We prove that for any orientable connected surface $S$ of finite type which is not a a sphere with at most four punctures or a torus with at most two punctures, the natural homomorphism from the extended mapping class group of $S$ to the group of  homeomorphisms of the space of geodesic laminations on $S$, equipped with the Thurston topology, is an isomorphism.
 \end{abstract}

\maketitle


\noindent AMS Mathematics Subject Classification:  57M50 ; 20F65 ; 57R30.
\medskip

\noindent Keywords: Geodesic lamination ; mapping class group ; hyperbolic surface ; Hausdorff topology ; Thurston topology .
\medskip

\section{Introduction}

In this paper $S=S_{g,p}$ is an orientable connected surface of finite type,
of genus $g\geq0$ with $p\geq0$ punctures. We assume that $S$ is
not a sphere with at most four punctures or a torus with at most two punctures.
Fixing a complete hyperbolic metric of finite area on $S$ we consider the set
$\mathcal{GL}(S)$ of geodesic laminations on $S$ with compact support.
As a set of compact subspaces of the metric space $S$, $\mathcal{GL}(S)$ is equipped with the Hausdorff metric  which we denote by $d_{H}$. We denote by
$\mathcal{T}_{H}$ the topology induced by $d_{H}$, and we call it the Hausdorff topology. We consider on
$\mathcal{GL}(S)$ a second topology $\mathcal{T}$, referred to as the Thurston topology  (see the definition in \S \ref{s:def} below) and which is weaker than 
$\mathcal{T}_{H}$. Any homeomorphism $h:S\rightarrow S$ induces by push-forward a map
$h_{\ast}:\mathcal{GL}(S)\rightarrow\mathcal{GL}(S)$ which is a homeomorphism
with respect to both topologies, $\mathcal{T}_{H}$ and $\mathcal{T}.$ The main
result of this paper is the following:

\begin{theorem} \label{main theorem} 

The natural homomorphism from the extended mapping class group of $S$ to the group of  homeomorphisms of $\mathcal{GL}(S)$ equipped with the Thurston topology, is an isomorphism. 

\end{theorem}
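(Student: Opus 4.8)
The plan is to prove that $\rho\colon \mathrm{MCG}^{\pm}(S)\to \mathrm{Homeo}(\mathcal{GL}(S),\mathcal{T})$, $[h]\mapsto h_{\ast}$, is both injective and surjective, the whole argument resting on one structural feature of the Thurston topology. First I would record that the open sets of $\mathcal{T}$ are generated by the sets $U(K)=\{\lambda\in\mathcal{GL}(S)\tq \lambda\cap K=\emptyset\}$ with $K\subset S$ compact, and deduce that the specialization preorder of $\mathcal{T}$ is exactly inclusion of laminations: one checks directly that $\lambda\in\overline{\{\mu\}}$ if and only if $\mu\subseteq\lambda$, so that $\overline{\{\mu\}}=\{\lambda\tq\lambda\supseteq\mu\}$. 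In particular the closed points are precisely the maximal laminations and $\emptyset$ is the generic point. Since any homeomorphism $F$ of $(\mathcal{GL}(S),\mathcal{T})$ carries closures to closures, $F$ is forced to be an automorphism of the poset $(\mathcal{GL}(S),\subseteq)$. Moreover $F$ preserves the relation ``$\lambda_{1}\cup\lambda_{2}$ is again a geodesic lamination'', because this holds exactly when $\lambda_{1},\lambda_{2}$ admit a least upper bound in the poset (their union), and $F$ respects least upper bounds; for two simple closed geodesics this relation is precisely $i(\gamma_{1},\gamma_{2})=0$.

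The main obstacle is then to give a description of the subset $V\subset\mathcal{GL}(S)$ of simple closed geodesics that uses only $\mathcal{T}$ and $\subseteq$, so that every $F$ satisfies $F(V)=V$. The atoms of the poset above $\emptyset$ are the minimal laminations, but these comprise both the simple closed geodesics \emph{and} the exceptional minimal laminations, and separating these two families is the crux of the whole proof. I would separate them through a finer analysis of the local structure at a minimal lamination $\mu$: the surface ``cut along'' a simple closed geodesic has strictly smaller complexity, so the down-set of laminations disjoint from (i.e.\ compatible with but not containing) $\gamma$ is comparatively large and itself decomposes in a recognizable way, whereas an exceptional minimal lamination fills a nondegenerate subsurface and leaves correspondingly little room. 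Concretely I expect to characterize $V$ via the combinatorics of maximal families of pairwise compatible atoms (pants decompositions) together with the neighborhood filter given by the $U(K)$. This is the step I expect to resist a short argument.

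Granting the description of $V$, the map $F|_{V}$ is a bijection of $V$ preserving $i=0$ through the compatibility relation, hence an automorphism of the curve complex $\mathcal{C}(S)$. Because $S$ is none of the excluded small surfaces, these being exactly the exceptions of curve-complex rigidity, the theorem of Ivanov, Korkmaz and Luo gives $\mathrm{Aut}(\mathcal{C}(S))=\mathrm{MCG}^{\pm}(S)$, so there is a homeomorphism $h$ of $S$ with $h_{\ast}|_{V}=F|_{V}$. It remains to propagate this agreement from $V$ to all of $\mathcal{GL}(S)$. Set $G=(h_{\ast})^{-1}\circ F$, a homeomorphism of $(\mathcal{GL}(S),\mathcal{T})$ that fixes $V$ pointwise and preserves $\subseteq$. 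For a lamination $\lambda$ that is a Thurston limit of simple closed geodesics $\gamma_{n}$ with $\limsup\gamma_{n}=\lambda$, continuity of $G$ gives $\gamma_{n}=G(\gamma_{n})\to G(\lambda)$, whence $\lambda=\limsup\gamma_{n}\subseteq G(\lambda)$; applying the same to $G^{-1}$ and using that $G$ preserves inclusion yields $G(\lambda)\subseteq\lambda$, so $G(\lambda)=\lambda$. The delicate point here is that approximation with $\limsup\gamma_{n}=\lambda$ is available only for chain-recurrent laminations; the remaining laminations, which carry finitely many isolated leaves over a chain-recurrent part already fixed by $G$, I would handle by the order structure, fixing their isolated leaves one at a time using that $G$ permutes the laminations lying immediately above the fixed chain-recurrent sublamination. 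This gives $G=\mathrm{id}$ and hence $F=h_{\ast}$, proving surjectivity.

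For injectivity, suppose $[h]\in\ker\rho$, so $h_{\ast}=\mathrm{id}$ on $\mathcal{GL}(S)$. Then $h_{\ast}$ fixes every simple closed geodesic, hence $h$ fixes every isotopy class of essential simple closed curve on $S$. Since $S$ is not one of the excluded surfaces, the Alexander method shows that a homeomorphism fixing all such isotopy classes is isotopic to the identity, so $\ker\rho$ is trivial. Combined with the surjectivity established above, $\rho$ is an isomorphism, which is the assertion of Theorem~\ref{main theorem}.
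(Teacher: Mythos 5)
Your skeleton coincides with the paper's (extract inclusion-preservation from the topology, identify the simple closed geodesics, invoke Ivanov--Korkmaz--Luo, then propagate the identity from curves to all laminations), but the step you yourself call ``the crux'' --- a $\mathcal{T}$-intrinsic characterization of the set $V$ of simple closed geodesics among all minimal laminations --- is precisely the main technical content of the paper, and you do not supply it. The paper proves it by a counting argument: every homeomorphism preserves inclusions, hence preserves the length of maximal chains of sublaminations, and a maximal \emph{finite} lamination containing a generalized pants decomposition has chain length exactly $9g-9+3b$, while every other maximal lamination (finite without a pants decomposition, or infinite) has strictly smaller length; from this one gets that such laminations are permuted, then by induction on the number of components that unions of closed geodesics go to unions of closed geodesics, and finally that finite laminations go to finite laminations. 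Without an argument of this kind, $F(V)=V$ is unsupported and the curve-complex automorphism never materializes. Note also that your preliminary description of $\mathcal{T}$ is wrong on two counts: the sets $\{\lambda:\lambda\cap K=\emptyset\}$ are \emph{not} Thurston-open (any Thurston-open set containing $\lambda$ contains every lamination containing $\lambda$, e.g.\ $\lambda\cup\gamma$ for $\gamma$ disjoint from $\lambda$), and the specialization order is reversed: $\overline{\{\mu\}}$ is the set of \emph{sub}laminations of $\mu$, so the closed points are the minimal laminations, not the maximal ones (and $\emptyset\notin\mathcal{GL}(S)$). The consequence you actually need --- that a homeomorphism is an automorphism of $(\mathcal{GL}(S),\subseteq)$ --- survives this correction, but your chain-recurrent limit computation then also runs the other way: Thurston convergence $\gamma_n\to G(\lambda)$ together with Hausdorff convergence $\gamma_n\to\lambda$ gives $G(\lambda)\subseteq\lambda$, not $\lambda\subseteq G(\lambda)$; the symmetric application to $G^{-1}$ still closes the loop.

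Two further gaps. First, your plan for laminations with isolated leaves --- ``fixing their isolated leaves one at a time using that $G$ permutes the laminations lying immediately above the fixed chain-recurrent sublamination'' --- cannot work on order data alone: among the covers of a fixed lamination $\mu$ of the form $\mu\cup\{\text{spiraling leaf}\}$ there are leaves differing only in the side of $\mu$ from which they spiral or in the orientation of spiraling, and these have identical disjointness patterns with every simple closed geodesic, hence are indistinguishable by the poset structure relative to $V$. This is exactly why the paper needs its two lemmas on spiraling configurations: it builds auxiliary maximal finite laminations $\Lambda_P$ with prescribed spiraling orientations (condition (3) there), shows these are fixed because they are Hausdorff limits of simple closed geodesics, and then pins down each spiraling leaf using further auxiliary leaves and curves; that is genuine work your proposal skips. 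Second, your injectivity argument fails for the closed surface of genus $2$: the hyperelliptic involution fixes every isotopy class of essential simple closed curve, so ``fixes all isotopy classes of curves $\Rightarrow$ isotopic to the identity'' is false there, and the Alexander method does not give what you claim. Your hypothesis $h_{\ast}=\mathrm{id}$ on all of $\mathcal{GL}(S)$ is strong enough to conclude, but you only use its restriction to curves; the paper closes this case by exhibiting a maximal geodesic lamination, built over an invariant pants decomposition by choosing spiraling directions, that the hyperelliptic involution does not preserve, so that the involution acts nontrivially on $\mathcal{GL}(S)$. Your proof needs the same supplement.
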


In particular, if $f:\mathcal{GL}(S)\rightarrow\mathcal{GL}
(S)\mathcal{\ }$is a homeomorphism with respect to the topology $\mathcal{T}$, then there is
a homeomorphism $h:S\rightarrow S$ such that $h_{\ast}=f.$

This shows that the space $\mathcal{GL}(S)$ equipped with the Thurston topology is  definitely not topologically homogeneous. Furthermore, since its homeomorphism group is countable, this space does not contain any open set which is a manifold of any positive dimension, in contrast with measured lamination space which is topologically a sphere. 

The analogous result for the Hausdorff topology is not true but it is conceivabe that  for every isometry $f$ of the
metric space $(\mathcal{GL}(S),d_{H})$, there is a homeomorphism
$h:S\rightarrow S$ such that $h_{\ast}=f.$

The result is in the spirit of several rigidity results that were obtained by
various authors in the context of mapping class group actions on different
spaces, and it is close more specially to the results in \cite{Papa}, \cite{Ohs} and
\cite{Ohs2} which concern actions by homeomorphisms. We note that although the statements of these and other rigidity theorems all look alike, all of them are interesting because each proof displays new features of the space on which the mapping class group acts, in different settings (combinatorial, topological, holomorphic, metric, etc.), and the arguments in each setting are often different.

The proof of Theorem \ref{main theorem} is given in \S \ref{s:proof}.  It involves the construction from the homeomorphism $f$ an automorphism of the complex of curves of $S$. The main technical point is to show that $f$ preserves the set of finite laminations. For the passage between properties of laminations and homeomorphisms of $\mathcal{GL}(S)$, the idea is to translate inclusions $\Lambda_1\subset \Lambda_2$ between geodesic laminations into set-theoretical properties between open sets of $\mathcal{GL}(S)$.

We would like to thank Ken'ichi Ohshika who read a preliminary version of this paper and corrected several mistakes, and Yi Huang  who pointed out a gap in a previous version.

\section{Definitions and Preliminaries}\label{s:def}

On the surface $S$, we fix a complete hyperbolic metric of
finite area. A \textit{geodesic lamination} $\Lambda\subset S$
is a compact non-empty subset which is the union of disjoint simple geodesics. We say that a geodesic
lamination is \textit{maximal} if it is not a proper sublamination of
any other geodesic lamination. We say that a geodesic lamination is \textit{minimal}
if it does not contain any proper sublamination. Note that this set-theoretic definition of a minimal lamination is not the same as the usual definition of a minimal laminations, since usually a lamination is said to be minimal if its leaves are dense in the support.

A geodesic lamination which is a finite union of geodesics is
called a \textit{finite lamination.} Otherwise, it is said to be \emph{infinite}. 

The following two subsets of $\mathcal{GL}(S)$ will play special roles in the sequel:

$\bullet $ $\mathcal{FGL}(S)$ is the subset of finite laminations of
$\mathcal{GL}(S)$ . An element of
$\mathcal{FGL}(S)$ is made out of a finite union of disjoint simple closed
geodesics $\{\gamma_{i}\}$ together with a finite number of infinite
geodesics, each spiraling from each end around one geodesic in $\{\gamma
_{i}\}$. 

$\bullet$  $\mathcal{CGL}(S)$ is the set of geodesic
laminations whose leaves are simple closed geodesics. 

Obviously $\mathcal{CGL}
(S)\subset\mathcal{FGL}(S).$

For any subset $X$ of $S$ and for any $\epsilon >0$, we set

\[N_{\varepsilon}(X)=\{x\in S:\exists y\in X \hbox{ with } 
 d(x,y)<\varepsilon\}.\] 
 
 The following definition is classical:

\begin{definition}
Let $X$ and $X^{\prime}$ be two compact subsets of $S$. The \emph{Hausdorff
distance} between $X$ and $X^{\prime}$  is the quantity 
$$d_{H}(X,X^{\prime})=\inf\{\varepsilon>0:X\subset N_{\varepsilon} 
(X^{\prime}) \hbox{ and } X^{\prime}\subset N_{\varepsilon}(X\}.$$
\end{definition}

It is easy to see that the function $d_H$ is a distance function on the set of compact subsets of $S$. Such a definition was made by F. Hausdorff for the set of compact subsets of   $\mathbb{R}^n$, and it was used by H. Busemann \cite{Busemann} for the set of compact sets of a general metric space.

We shall mostly use the notions $N_{\varepsilon}(X)$ and $d_H(X,Y)$ for elements $X,Y\subset S$ which are geodesic laminations on $S$. We also denote by $d_H$ the restriction of the Hausdorff metric to $\mathcal{GL}(S)$.

We also use the following notation:

For any $\Lambda$ $\in\mathcal{GL}(S)$ and for any $\varepsilon>0$, 

  \[\mathcal{V}_{\varepsilon}(\Lambda)=\{\Lambda^{\prime} 
\in\mathcal{GL}(S):N_{\varepsilon}(\Lambda^{\prime})\supset\Lambda \hbox{ and }
N_{\varepsilon}(\Lambda)\supset\Lambda^{\prime}\}.\]

The topology induced by $d_H$ on the set of subsets of $S$ as well as its restiction to $\mathcal{GL}(S)$, which we shall denote by $\mathcal{T}_{H}$, are
called the \textit{Hausdorff topology}. For the topology $\mathcal{T}_{H}$ any set
$\mathcal{V}_{\varepsilon}(\Lambda)$ is open. Moreover, it is easy to
see that the collection of sets $\mathcal{V}_{\varepsilon}(\Lambda)$ for all
$\varepsilon>0$ and $\Lambda$ $\in\mathcal{GL}(S),$ constitute a basis for
$\mathcal{T}_{H}.$

We now equip the set $\mathcal{GL}(S)$ with a second topology.

\begin{definition}
Let $V$ be an open subset of $S.$ Set 
$$\mathcal{U}_{V}=\{\Lambda \in\mathcal{GL}(S):\Lambda\cap V\neq\emptyset\}.$$
 We let $\mathcal{T}$ be 
the topology on $\mathcal{GL}(S)$ with subbasis the sets
$\mathcal{U}_{V},$ where $V$ varies over the set of open subsets of $S.$
\end{definition}

Following the
terminology of (\cite{CEG}, Def. I.4.1.10), we call the topology $\mathcal{T}$ on $\mathcal{GL}(S)$ the \textit{Thurston topology}. The original reference for the
topology $\mathcal{T}$ is (Thurston \cite{Thurston}, Section 8.10), where $\mathcal{T}
$ is referred to as the \textit{geometric topology}. Clearly the topology
$\mathcal{T}$ does not satisfy the first axiom of separation. Indeed, take a geodesic lamination $\Lambda$ that contains a strict sublamination $\Lambda_1\subsetneqq\Lambda_2$; then every open set for $\mathcal{T}$ containing $\Lambda_1$ contains $\Lambda$. In particular
the topology $\mathcal{T}$ is not Hausdorff, unlike the topology $\mathcal{T}_H$, which is induced by a metric.

\begin{lemma}
\label{weaker topology}$\mathcal{T}\subset\mathcal{T}_{H}$ i.e. the topology
$\mathcal{T}$ is weaker than the topology $\mathcal{T}_{H}.$
\end{lemma}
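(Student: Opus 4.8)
The plan is to show that every subbasic open set $\mathcal{U}_V$ of the Thurston topology $\mathcal{T}$ is also open in the Hausdorff topology $\mathcal{T}_H$. Since $\mathcal{T}$ is generated as a topology by the subbasis $\{\mathcal{U}_V : V \text{ open in } S\}$, and arbitrary unions and finite intersections of $\mathcal{T}_H$-open sets remain $\mathcal{T}_H$-open, it suffices to verify the inclusion at the level of subbasic sets. Thus the whole lemma reduces to the single claim: for every open $V \subset S$, the set $\mathcal{U}_V = \{\Lambda \in \mathcal{GL}(S) : \Lambda \cap V \neq \emptyset\}$ belongs to $\mathcal{T}_H$.

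\medskip

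To prove this, I would fix a lamination $\Lambda_0 \in \mathcal{U}_V$ and produce a Hausdorff-open neighbourhood of $\Lambda_0$ contained in $\mathcal{U}_V$. Since $\Lambda_0 \cap V \neq \emptyset$, pick a point $x \in \Lambda_0 \cap V$. Because $V$ is open, there exists $\varepsilon > 0$ with the metric ball $B(x, 2\varepsilon) \subset V$. I claim that $\mathcal{V}_{\varepsilon}(\Lambda_0) \subset \mathcal{U}_V$, which finishes the argument because the sets $\mathcal{V}_{\varepsilon}(\Lambda_0)$ form a basis for $\mathcal{T}_H$ and are in particular $\mathcal{T}_H$-open. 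To verify the claim, take any $\Lambda' \in \mathcal{V}_{\varepsilon}(\Lambda_0)$. By definition this gives $\Lambda_0 \subset N_{\varepsilon}(\Lambda')$, so in particular $x \in N_{\varepsilon}(\Lambda')$, meaning there is a point $y \in \Lambda'$ with $d(x,y) < \varepsilon$. Then $y \in B(x, \varepsilon) \subset B(x, 2\varepsilon) \subset V$, so $y \in \Lambda' \cap V$ and hence $\Lambda' \cap V \neq \emptyset$, i.e.\ $\Lambda' \in \mathcal{U}_V$. This shows $\Lambda_0$ is an interior point of $\mathcal{U}_V$ for $\mathcal{T}_H$, and since $\Lambda_0$ was arbitrary, $\mathcal{U}_V$ is $\mathcal{T}_H$-open.

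\medskip

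This establishes $\mathcal{T} \subset \mathcal{T}_H$. There is essentially no serious obstacle here; the argument is a routine unwinding of the two definitions, and the only point requiring minor care is the direction of the Hausdorff estimate that one uses. Note that only the half-condition $\Lambda_0 \subset N_{\varepsilon}(\Lambda')$ from the definition of $\mathcal{V}_{\varepsilon}(\Lambda_0)$ is needed; the reverse inclusion $\Lambda' \subset N_{\varepsilon}(\Lambda_0)$ plays no role. The strictness of the inclusion $\mathcal{T} \subsetneq \mathcal{T}_H$ is a separate matter and follows from the already-observed failure of $\mathcal{T}$ to be Hausdorff (indeed, to satisfy even the $T_1$ axiom), whereas $\mathcal{T}_H$ is metric; but the lemma as stated only asserts the inclusion, so this containment is all that is required.
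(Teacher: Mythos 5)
Your proof is correct and follows essentially the same route as the paper: reduce to showing each subbasic set $\mathcal{U}_V$ is $\mathcal{T}_H$-open, then for $\Lambda_0\in\mathcal{U}_V$ pick $x\in\Lambda_0\cap V$, a ball around $x$ inside $V$, and use the half-condition $\Lambda_0\subset N_{\varepsilon}(\Lambda')$ to show $\mathcal{V}_{\varepsilon}(\Lambda_0)\subset\mathcal{U}_V$. The only (immaterial) difference is that you shrink the radius by a factor of $2$ and phrase the conclusion via interior points rather than writing $\mathcal{U}_V$ explicitly as a union of basis sets, as the paper does.
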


\begin{proof}
It suffices to prove that  for each open subset  $V$ of $S$, $\mathcal{U}_{V}\in\mathcal{T}_{H}$. For $\Lambda\in\mathcal{U}_{V}$, we have
$\Lambda\cap V\neq\emptyset.$ For any $x\in\Lambda\cap V$, there
exists an open ball $B(x,\varepsilon_{\Lambda})$ in $S$  of center $x$ and radius
$\varepsilon_{\Lambda}$ such that $B(x,\varepsilon_{\Lambda})\subset V$. We
now prove the following:

\begin{equation} \label{as}
\Lambda\in\mathcal{V}_{\varepsilon_{\Lambda}}(\Lambda)\subset
\mathcal{U}_{V}. 
\end{equation}

First, it is obvious that $\Lambda\in\mathcal{V}_{\varepsilon_{\Lambda}}
(\Lambda).$ To prove the inclusion $\mathcal{V}_{\varepsilon_{\Lambda}
}(\Lambda)\subset\mathcal{U}_{V}$ we note that if $\Lambda^{\prime}
\in\mathcal{V}_{\varepsilon_{\Lambda}}(\Lambda)$ then $\Lambda\subset
N_{\varepsilon_{\Lambda}}(\Lambda^{\prime})$ and hence $x\in N_{\varepsilon
_{\Lambda}}(\Lambda^{\prime}).$ Therefore we can find a point $y$ in $\Lambda^{\prime}$
such that $d(x,y)<\varepsilon_{\Lambda}$ and hence $y\in\Lambda^{\prime}\cap
B(x,\varepsilon_{\Lambda})\neq\emptyset.$ Since $B(x,\varepsilon_{\Lambda
})\subset V,$ this implies that $\Lambda^{\prime}\cap V\neq\emptyset.$
Therefore $\Lambda^{\prime}\in\mathcal{U}_{V}$ and the inclusion
$\mathcal{V}_{\varepsilon_{\Lambda}}(\Lambda)\subset\mathcal{U}_{V}$ is proven.

Now from (\ref{as}) we deduce immediately that 
$$\mathcal{U}_{V}
=\cup\{\mathcal{V}_{\varepsilon_{\Lambda}}(\Lambda):\Lambda
\in\mathcal{GL}(S) \hbox{ and } \Lambda\cap V\neq\emptyset\}.$$
 Therefore
$\mathcal{U}_{V}\in\mathcal{T}_{H}$. \end{proof}

The metric space $(\mathcal{GL}(S),d_{H})$ is compact. This is a general result on the Hausdorff metric on the set $\mathcal{B}(X)$ of compact subsets of a metric space $X$, and $\mathcal{GL}(S)$ is a closed subset of $\mathcal{B}(S)$
(cf. \cite{Casson},Theorem 3.4 for this special case). Therefore $\mathcal{T}_{H}$ is a compact topology.
Since $\mathcal{T\subset T}_{H},$ it follows that $\mathcal{T}$ is also a compact topology.

In the next theorem we summarize basic properties of minimal geodesic
laminations and we also give a description of the structure of maximal
geodesic laminations. The properties are all well known from Thurston's theory, and we give references for the convenience of the reader.

\begin{theorem}
\label{structure} {\rm  (I) (Proposition A.2.1, p. 142 in \cite{Otal})} Let
$\Lambda$ be an arbitrary geodesic lamination of $S.$ Then $S-\Lambda$
consists of finitely many components. Let $U$ be such a component. The
completion $C(U)$ of $U$ with respect to the metric induced by the Riemannian
metric of $S$ is a complete hyperbolic surface of finite area with geodesic boundary.

{\rm (II) (Corollary A.2.4, p. 143 in \cite{Otal} or Lemmata 4.2 and 4.3 in
\cite{Casson})} Let $\Lambda$ be a minimal geodesic lamination with infinitely
many leaves. Then every leaf of $\Lambda$ is dense in $\Lambda.$
Furthermore, $\Lambda$ contains a finite number of leaves which are isolated
from one side. These leaves appear as boundary geodesics of some $C(U),$ where
$U$ is a component of $S-\Lambda$; they will be referred to as \emph{boundary leaves} of
$\Lambda.$

{\rm (III) (Theorem I.4.2.8, p. 83 in \cite{CEG})} Let $\Lambda$ be an arbitrary geodesic
lamination of $S$. Then $\Lambda$ consists of the disjoint union of a finite
number of minimal sublaminations of $\Lambda$ together with a finite set of
additional geodesics each end of which spirals onto a minimal lamination. Each
of the additional geodesics is isolated, i.e. it is contained in an open subset
of $S$ which is disjoint from the rest of the lamination.
\end{theorem}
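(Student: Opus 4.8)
These three statements are classical parts of the structure theory of geodesic laminations, so my plan is to assemble them from two recurring mechanisms: the finiteness assertions should come from area (Gauss--Bonnet) bounds, and the density/minimality assertions from the observation that the closure of a leaf is again a sublamination. Throughout I would use that each leaf is a complete simple geodesic, that $\Lambda$ is closed, that distinct leaves are disjoint, and that the fixed finite-area hyperbolic metric on $S$ gives $\operatorname{area}(S)=-2\pi\chi(S)<\infty$.

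For (I), I would first note that the complement $S\setminus\Lambda$ is open and that each component $U$ inherits the hyperbolic path metric; since $U$ is an open subset of the hyperbolic surface $S$, this metric is locally isometric to $\mathbb{H}^2$, so its metric completion $C(U)$ is a complete hyperbolic surface. The boundary added in passing to $C(U)$ consists of pieces of leaves of $\Lambda$, which are geodesics, so $\partial C(U)$ is geodesic; and $\operatorname{area}(C(U))=\operatorname{area}(U)\le\operatorname{area}(S)<\infty$. To obtain finiteness of the number of components I would then show that each $C(U)$ has area bounded below by a universal positive constant, the extremal case being an ideal triangle of area $\pi$; combined with the finite total area, this forces only finitely many components.

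For (II), I would take an arbitrary leaf $\ell$ and consider its closure $\overline{\ell}$. This is a closed subset of $\Lambda$, and being contained in the lamination it is itself a disjoint union of complete simple geodesics, hence a nonempty geodesic lamination; minimality of $\Lambda$ (no proper nonempty sublamination) then forces $\overline{\ell}=\Lambda$, so every leaf is dense. A boundary leaf is by definition isolated from one side, meaning it lies in the frontier of some complementary component $U$; here I would invoke (I), which gives finitely many such $U$, each completion $C(U)$ being a finite-area hyperbolic surface with geodesic boundary and hence having finitely many boundary geodesics. Only finitely many leaves of $\Lambda$ can therefore occur as boundary leaves.

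For (III), I would first produce minimal sublaminations: the nonempty sublaminations of $\Lambda$ ordered by inclusion form a poset in which every descending chain has a lower bound (its intersection is nonempty by compactness of $(\mathcal{GL}(S),d_{H})$ and is again a sublamination), so Zorn's lemma yields minimal ones. Distinct minimal sublaminations are disjoint, since a shared leaf would force equality and crossing leaves are impossible, and their number is finite because the closed-geodesic pieces form pairwise disjoint simple closed curves (at most $3g-3+p$ of them) while each infinite minimal piece bounds complementary regions, finite in number by (I). The hard part will be the final structural claim, that every leaf not contained in any minimal sublamination is isolated and spirals at each end onto a minimal sublamination: this requires a careful local analysis of how such a leaf accumulates, showing that its two ends limit onto minimal sets and that, away from the finitely many minimal pieces, the lamination has no further accumulation, so these leaves sit in open sets meeting no other leaf; a concluding Euler characteristic bound would cap their number. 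I expect this accumulation analysis to be where the genuine work lies, the finiteness statements being comparatively routine consequences of Gauss--Bonnet.
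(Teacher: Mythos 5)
First, note what the paper itself does with this statement: it gives \emph{no proof at all}. The theorem is presented as a summary of known results, with precise pointers to the literature (Proposition A.2.1 and Corollary A.2.4 of \cite{Otal}, Lemmata 4.2--4.3 of \cite{Casson}, Theorem I.4.2.8 of \cite{CEG}), so your attempt has to be judged as a reconstruction of those textbook proofs. For parts (I) and (II) your reconstruction is essentially the standard one and is sound at the level of a sketch: the completion $C(U)$ of a complementary component is a finite-area hyperbolic surface with geodesic boundary, doubling along the boundary plus Gauss--Bonnet gives the uniform lower bound $\pi$ on its area (attained by the ideal triangle), so finiteness of the number of components follows from the finite area of $S$; and for a set-theoretically minimal $\Lambda$ the closure of any leaf is a nonempty sublamination, hence equals $\Lambda$, while a leaf isolated on one side is a boundary geodesic of one of the finitely many $C(U)$, each of which has only finitely many boundary geodesics.

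Part (III), however, contains a genuine gap, and you half-acknowledge it yourself: the assertion that every leaf not lying in a minimal sublamination is \emph{isolated} and that each of its ends accumulates exactly onto a minimal sublamination \emph{is} the content of Theorem I.4.2.8 of \cite{CEG}; deferring it to ``a careful local analysis'' leaves the theorem unproved, whereas the parts you do carry out (Zorn's lemma for existence of minimal sets, and their disjointness) are the easy steps. There is also a concrete flaw in your finiteness count for the infinite minimal pieces: you invoke (I), but the complementary regions in (I) are those of the \emph{whole} lamination $\Lambda$, and an infinite minimal sublamination $M\subset\Lambda$ need not contribute any boundary geodesic to the completion of any component of $S-\Lambda$. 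Indeed, if an isolated leaf of $\Lambda$ spirals onto $M$, its strands accumulate onto the boundary leaves of $M$ from their isolated sides; the adjacent complementary component then winds around $M$ infinitely many times, and points of $M$ lie at infinite intrinsic distance from that component, so $M$ does not appear in the boundary of its completion (the same phenomenon already occurs when leaves spiral onto a closed geodesic from both sides). The standard way to bound the number of minimal pieces is different: distinct minimal sublaminations are disjoint, closed-geodesic ones are pairwise non-isotopic (so at most $3g-3+p$ of them), and each infinite one fills an essential subsurface of negative Euler characteristic, these subsurfaces being realizable disjointly, so their number is at most $|\chi(S)|$. As it stands, your (I) and (II) are acceptable, but (III) --- the part of the theorem the paper actually leans on, via generating sets and lengths of chains --- is not established.
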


In Item (III) above, the fact that an end of a geodesic \emph{spirals} on a minimal sublamination $\Lambda'$ of $\Lambda$ means that  the set of accumulation points of this end on the surface is $\Lambda'$. 

The following lemma and proposition will also be used in the next section.

\begin{lemma}
\label{2 boundary leaves}Let $\Lambda$ be an infinite minimal geodesic
lamination of $S.$ Then $\Lambda$ has at least two boundary leaves.
\end{lemma}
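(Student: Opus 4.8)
The plan is to argue by contradiction, assuming that $\Lambda$ has at most one boundary leaf. First I would record that $\Lambda$ has at least one: a geodesic lamination is nowhere dense and of measure zero, so $S-\Lambda$ is nonempty, and by Theorem \ref{structure}(I) it has a component $U$ whose completion $C(U)$ is a finite-area hyperbolic surface with geodesic boundary. Finiteness of the area forbids a frontier made entirely of non-isolated leaves (these would force infinitely many ideal vertices, hence infinite area), so $\partial C(U)$ is nonempty and consists of leaves isolated from the $U$-side, that is, boundary leaves. It therefore remains to exclude the case of exactly one boundary leaf $\ell$. Since $\Lambda$ is infinite and minimal it has no closed leaf (a closed leaf is a minimal sublamination, hence equals $\Lambda$ by minimality, forcing $\Lambda$ finite), so $\ell$ is bi-infinite; and $\ell$ cannot be isolated from both sides, as that would make it a connected component and hence all of $\Lambda$. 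Thus $\ell$ is isolated from exactly one side and borders a single complementary region $U$ on that side.

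Next I would show that $U$ is the \emph{only} complementary region and that the whole geodesic boundary of $C(U)$ maps to $\ell$. By the paragraph above every complementary region has nonempty geodesic boundary, necessarily consisting of boundary leaves; since $\ell$ is the only boundary leaf and, being isolated from one side only, borders no region other than $U$, no further region can occur. Consequently $C(U)$ is a finite-area hyperbolic surface all of whose geodesic boundary maps to $\ell$, and since $\Lambda$ has measure zero we get $\mathrm{Area}(C(U))=\mathrm{Area}(S)=2\pi|\chi(S)|\in 2\pi\mathbb{Z}$.

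The key step is to bound the number of geodesic boundary sides of $C(U)$. The completion map $\iota\colon C(U)\to S$ restricts to the inclusion on the interior $U$, hence is injective there, while the isolated side of $\ell$ carries an embedded one-sided collar $A\subset S$ meeting $\Lambda$ only along $\ell$. If $C(U)$ had two distinct boundary sides, each would map isometrically onto $\ell$ with an interior half-collar mapping into $A$; picking a point of $A$ away from the ideal vertices, its two preimages in the two half-collars are distinct points of the interior $U$ with the same $\iota$-image, contradicting injectivity. Hence $C(U)$ has exactly one boundary side, equal to a copy of $\ell$. Because $\Lambda$ has compact support, no end of a leaf enters a cusp of $S$, so the two ends of this single side terminate at ideal vertices (spikes) rather than cusps; a single side then forms one crown with exactly one spike. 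Gauss--Bonnet now gives $\mathrm{Area}(C(U))=2\pi(2g'-2+1+c')+\pi$, an odd multiple of $\pi$, contradicting $\mathrm{Area}(C(U))\in 2\pi\mathbb{Z}$. This contradiction shows that $\Lambda$ has at least two boundary leaves.

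I expect the main obstacle to be the collar/injectivity step, namely making precise that a single leaf can contribute at most one geodesic boundary arc to $C(U)$, together with the clean separation of the spike case from the cusp case; the latter is exactly where compactness of the support of $\Lambda$ enters, and once both are in place the parity computation from Gauss--Bonnet closes the argument.
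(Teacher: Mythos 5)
Your proposal is correct in substance, but it takes a genuinely different route from the paper's. The paper disposes of the lemma in three lines in the universal cover: lift $\Lambda$ to a lamination $\hat\Lambda$ of the hyperbolic plane, observe that the complementary components of $\hat\Lambda$ are ideal polygons, and project two sides of one such polygon back to $S$. You instead work entirely on the surface: assuming a unique boundary leaf $\ell$, you show that there is only one complementary region $U$, that $C(U)$ has only one boundary geodesic, and that Gauss--Bonnet then makes $\mathrm{Area}(C(U))$ an odd multiple of $\pi$, contradicting $\mathrm{Area}(C(U))=\mathrm{Area}(S)=2\pi(2g-2+p)\in 2\pi\mathbb{Z}$. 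Your route is longer, but it confronts head-on a point that the paper's argument passes over in silence: two sides of a complementary component upstairs could a priori be lifts of one and the same leaf downstairs, and when a complementary region of $\Lambda$ carries topology (e.g.\ when $\Lambda$ fills only a proper subsurface of $S$) the components upstairs are not even finite-sided ideal polygons. Your parity argument is precisely what excludes the ``all boundary sides come from a single leaf'' configuration, so in this respect your proof is more careful about the actual content of the lemma; what the paper's approach buys is brevity and independence from the structure theory of $C(U)$ near infinity.

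Three steps need tightening, all fixable. (a) Uniform one-sided ``half-collars'' of a boundary geodesic of $C(U)$ do not exist: a collar of definite width about a bi-infinite geodesic has infinite area, while $C(U)$ has finite area. Replace them by small half-disks centred at single boundary points; these do map isometrically into the isolated side of $\ell$, since on the non-isolated side the leaves of $\Lambda-\ell$, which are dense in $\Lambda$ by Theorem \ref{structure} (II), would enter the image of the half-disk's interior, which lies in $U$. (b) The contradiction at the end of the injectivity step is stated backwards: two preimages lying in $U$, where the completion map is the identity, and having the same image must be \emph{equal}, not distinct. The correct conclusion is that the two half-disks coincide as subsets of $U$, and then uniqueness of limits in the metric space $C(U)$ forces the two boundary points, hence the two boundary geodesics $b_1$ and $b_2$, to coincide --- this contradicts $b_1\neq b_2$, and the same argument (with the half-disk interiors lying in disjoint regions) rules out two distinct regions bordering $\ell$. (c) Your justification that $\partial C(U)\neq\emptyset$ (``infinitely many ideal vertices, hence infinite area'') is unconvincing; argue instead that if $\partial C(U)=\emptyset$ then $U$ has empty frontier in $S$ --- a frontier point would yield a short straight segment in $U$ ending on $\Lambda$, i.e.\ a Cauchy sequence in $U$ converging in $C(U)$ to a boundary point --- so $U$ would be open and closed, giving $U=S$ and $\Lambda=\emptyset$. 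With these repairs the argument is complete.
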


\begin{proof}The lift of $\Lambda$ to the universal cover of $S$ is a geodesic lamination $\hat{\Lambda}$ of the hyperbolic plane. Each component of the complement of $\hat{\Lambda}$ is an ideal polygon. The images in $S$ of two boundary leaves of such a polygon give the desired boundary leaves of $\Lambda$.
\end{proof}
\begin{proposition}[see \cite{CEG}, I 4.2.14 p. 81] \label{dense} The finite laminations are dense in the space of geodesic lamnitations equipped with the Hausdorff topology. Hence the same holds for the Thurston topology. 
\end{proposition}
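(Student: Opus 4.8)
The plan is to prove the Hausdorff statement first and then obtain the Thurston statement for free. Indeed, by Lemma \ref{weaker topology} we have $\mathcal{T}\subset\mathcal{T}_{H}$, so every nonempty $\mathcal{T}$-open set is in particular a nonempty $\mathcal{T}_{H}$-open set. A subset that meets every nonempty $\mathcal{T}_{H}$-open set therefore meets every nonempty $\mathcal{T}$-open set, which is precisely the assertion that $\mathcal{T}_{H}$-density implies $\mathcal{T}$-density. Thus it suffices to show that for every $\Lambda\in\mathcal{GL}(S)$ and every $\varepsilon>0$ there is a finite lamination $\Lambda'$ with $d_{H}(\Lambda,\Lambda')<\varepsilon$.

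To do this I would first reduce to the case of a single infinite minimal lamination. By Theorem \ref{structure}(III), $\Lambda$ is the disjoint union of finitely many minimal sublaminations $\mu_{1},\dots,\mu_{k}$ together with finitely many isolated geodesics, each end of which spirals onto some $\mu_{i}$. A minimal sublamination that is a single closed geodesic, and any isolated spiraling leaf, already lie in $\mathcal{FGL}(S)$; so the only pieces that must be approximated are the infinite minimal components. Hence the heart of the argument is: given an infinite minimal lamination $\mu$ and $\varepsilon>0$, produce a simple closed geodesic $\gamma$ with $d_{H}(\mu,\gamma)<\varepsilon$. For this I would close up a long leaf segment. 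By Theorem \ref{structure}(II) every leaf $\ell$ of $\mu$ is dense in $\mu$, so by compactness of $\mu$ a sufficiently long sub-arc $a\subset\ell$ is $(\varepsilon/2)$-dense, i.e. $\mu\subset N_{\varepsilon/2}(a)$. By recurrence the leaf returns to a small flow box around the initial point of $a$ along a strand nearly parallel to the initial one; closing $a$ by a short transverse arc $\sigma$ inside that flow box yields a closed curve $c=a*\sigma$ that, after a small perturbation, is simple and satisfies $c\subset N_{\delta}(\mu)$ with $\delta$ as small as we like. Taking $\gamma$ to be the geodesic representative of $c$ then produces a simple closed geodesic, and the inclusion $\mu\subset N_{\varepsilon/2}(a)\subset N_{\varepsilon}(\gamma)$ follows once we know that $a$ stays close to $\gamma$.

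The main obstacle is the reverse inclusion $\gamma\subset N_{\varepsilon}(\mu)$, that is, controlling how far the geodesic representative wanders from $\mu$. Here I would lift to the universal cover $\mathbb{H}^{2}$: a lift of $c$ is a bi-infinite path made of geodesic segments lying on leaves of the lifted lamination $\hat{\mu}$, joined by the short lifts of $\sigma$, hence a quasigeodesic whose constants are controlled by the length of $\sigma$ and the geometry of the flow box. By the Morse stability lemma this quasigeodesic stays within a bounded distance of the lift of $\gamma$, and the bound tends to $0$ as $\delta\to 0$ and the segments lengthen; since the segments lie on $\hat{\mu}$, this yields $\gamma\subset N_{\varepsilon}(\mu)$. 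Combining the two inclusions gives $d_{H}(\mu,\gamma)<\varepsilon$.

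Finally I would assemble the global approximation: replace each infinite minimal $\mu_{i}$ by such a $\gamma_{i}$, keep the closed-geodesic components unchanged, and reattach the finitely many spiraling leaves so that they now spiral onto the nearby $\gamma_{i}$, obtaining an element of $\mathcal{FGL}(S)$. The one further point requiring care is that the resulting family consists of pairwise disjoint geodesics; this I would arrange by performing all the closing-up constructions inside pairwise disjoint thin neighborhoods of the $\mu_{i}$, so that the approximants inherit disjointness from the components they replace. The outcome is a finite lamination $\Lambda'$ with $d_{H}(\Lambda,\Lambda')<\varepsilon$. Alternatively, and more directly, one may carry $\Lambda$ on a train track $\tau$ of branch-width $<\varepsilon$ and take for $\Lambda'$ a multicurve carried by $\tau$ and traversing every branch; then $\Lambda$ and $\Lambda'$ both lie in the $\varepsilon$-neighborhood of $\tau$, which gives the estimate at once.
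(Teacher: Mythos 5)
First, a point of reference: the paper does not actually prove Proposition \ref{dense}; it quotes it from \cite{CEG} (I.4.2.14), so your proposal can only be measured against the standard argument in the literature. Within that comparison, two parts of your plan are sound. The reduction of the Thurston statement to the Hausdorff statement via Lemma \ref{weaker topology} is correct (density in a finer topology implies density in a coarser one). And your treatment of a single infinite minimal lamination $\mu$ --- take a long $(\varepsilon/2)$-dense arc $a$ of a leaf, close it up in a flow box, and control the geodesic representative $\gamma$ by quasigeodesic stability --- is the standard core of the proof, modulo the usual care that the return strand must be traversed in the \emph{same direction} as the initial strand (otherwise the closed-up curve backtracks and its geodesic representative need not stay near $\mu$); such co-directed returns exist by pigeonhole among the infinitely many passages through the box.

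The genuine gap is in the assembly step, which is exactly where the substance of the proposition lies once Theorem \ref{structure} (III) has been invoked. The isolated leaves of $\Lambda$ cannot be kept: a leaf $\ell$ spiraling onto $\mu_i$ accumulates on all of $\mu_i$, so the union of $\ell$ with the new closed geodesics $\gamma_i$ is not even a closed subset of $S$ (hence not a lamination), and moreover $\ell$ penetrates every neighborhood $N_{\delta}(\mu_i)$, so it will in general cross $\gamma_i$. You acknowledge this by proposing to ``reattach'' each such leaf so that it spirals onto $\gamma_i$, but no construction is given, and your disjointness mechanism --- performing the closings in pairwise disjoint thin neighborhoods of the $\mu_i$ --- says nothing about these reattached leaves, precisely because the old leaves enter those neighborhoods. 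What is needed, and missing, is the production of simple bi-infinite geodesics, pairwise disjoint, disjoint from all the $\gamma_i$ and from the unchanged closed components, spiraling onto the prescribed $\gamma_i$, and Hausdorff-close to the old leaves; this is the technical heart of the cited proof. (For laminations with no isolated leaves the statement reduces to your minimal-component argument, so the isolated leaves are not a side case but the main difficulty.) Separately, your ``more direct'' train-track alternative is false as stated: a train track carrying a lamination with a leaf spiraling onto a closed geodesic is not recurrent --- at the switch where the spiral branch $s$ merges into the loop branch $b$ the switch condition reads $w(b)+w(s)=w(b)$, forcing $w(s)=0$ --- so no multicurve carried by $\tau$ traverses every branch. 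That shortcut works only when $\Lambda$ has no isolated leaves, which is again the easy part of the statement.
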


\section{On the action of a homeomorphism of $\mathcal{GL}(S)$}

We denote by $\mathcal{O}(S)$ the set of open subsets of $S$ and we fix an element $\Lambda$ of $\mathcal{GL}(S)$. We consider the sets $$\mathcal{O}_{\Lambda}(S)=\{V\in 
 \mathcal{O}(S)\ :\  V\cap\Lambda\neq\emptyset\}\subset  \mathcal{O}(S)$$ and 
 $$\mathcal{U}(\Lambda
)=\cap_{V \in\mathcal{O}_{\Lambda}(S)}  \mathcal{U}_{V}\subset \mathcal{GL}(S).$$
We have the following.

\begin{lemma}
\label{inclusion equivalence (1)} Let $\Lambda_{1},\Lambda_{2}\in
\mathcal{\ GL}(S).$ Then $\Lambda_{1}\subset\Lambda_{2}$ if and only if
$\Lambda_{2}\in\mathcal{U}(\Lambda_{1}).$
\end{lemma}

\begin{proof}
Assume that $\Lambda_{1}\subset\Lambda_{2}.$ For any $V\in$ $\mathcal{O}_{\Lambda
_{1}}(S),$ we consider the set $\mathcal{U}_{V}.$ Then $\Lambda_{2}\cap
V\neq\emptyset$, hence $\Lambda_{2}\in\mathcal{U}_{V}.$ Therefore
$\Lambda_{2}\in\mathcal{U}(\Lambda_{1}).$

Conversely, assume $\Lambda_{2}\in\mathcal{U}(\Lambda_{1}).$ If $\Lambda_{1} $ is
not a subset of $\Lambda_{2},$ then there exists $x\in\Lambda_{1}$ with
$x\notin\Lambda_{2}.$ Since $\Lambda_{2}$ is a compact subset of $S,$ there
exists $\varepsilon>0$ such that the open ball $B(x,\varepsilon)$ does not intersect $\Lambda_{2}.$ This implies that
$\Lambda_{2}\notin\mathcal{U}_{B(x,\varepsilon)}.$ This is a contradiction
since $\Lambda_{2}\in\mathcal{U}(\Lambda_{1})$. Hence $\Lambda_1\subset \Lambda_2$. 
\end{proof}

\begin{lemma}
\label{inclusion equivalence (2)} Let $\Lambda_{1},\Lambda_{2}\in
\mathcal{\ GL}(S).$ Then $\Lambda_{1}\subset\Lambda_{2}$ if and only if
$\mathcal{U} (\Lambda_{2})\subset\mathcal{U}(\Lambda_{1}).$
\end{lemma}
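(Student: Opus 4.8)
The plan is to deduce this lemma formally from Lemma \ref{inclusion equivalence (1)}, which already translates inclusion into membership. The crucial preliminary observation is that Lemma \ref{inclusion equivalence (1)} gives an explicit description of the set $\mathcal{U}(\Lambda)$, namely that it is exactly the family of laminations containing $\Lambda$:
$$\mathcal{U}(\Lambda)=\{\Lambda'\in\mathcal{GL}(S):\Lambda\subset\Lambda'\}.$$
Once this upward-closed description is in hand, the desired equivalence becomes the standard order-theoretic fact that a relation is reversed when passing to the associated up-sets, so the proof is purely formal.

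For the forward implication I would assume $\Lambda_1\subset\Lambda_2$ and take an arbitrary $\Lambda'\in\mathcal{U}(\Lambda_2)$. By Lemma \ref{inclusion equivalence (1)} this means $\Lambda_2\subset\Lambda'$, and transitivity of set inclusion then yields $\Lambda_1\subset\Lambda_2\subset\Lambda'$. A second application of Lemma \ref{inclusion equivalence (1)} gives $\Lambda'\in\mathcal{U}(\Lambda_1)$, and since $\Lambda'$ was arbitrary we conclude $\mathcal{U}(\Lambda_2)\subset\mathcal{U}(\Lambda_1)$.

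For the converse I would use reflexivity of inclusion: since $\Lambda_2\subset\Lambda_2$, Lemma \ref{inclusion equivalence (1)} gives $\Lambda_2\in\mathcal{U}(\Lambda_2)$. Assuming $\mathcal{U}(\Lambda_2)\subset\mathcal{U}(\Lambda_1)$, we then obtain $\Lambda_2\in\mathcal{U}(\Lambda_1)$, and one last application of Lemma \ref{inclusion equivalence (1)} (now with the roles chosen so that membership of $\Lambda_2$ in $\mathcal{U}(\Lambda_1)$ encodes $\Lambda_1\subset\Lambda_2$) delivers $\Lambda_1\subset\Lambda_2$.

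I do not expect any real obstacle here: the statement is a formal corollary of Lemma \ref{inclusion equivalence (1)} together with the reflexivity and transitivity of $\subset$. The only point worth flagging is the use of $\Lambda_2\in\mathcal{U}(\Lambda_2)$ in the converse direction, which is what lets the hypothesis on the sets $\mathcal{U}(\cdot)$ be ``tested'' on the single element $\Lambda_2$ and pulled back to an inclusion of laminations.
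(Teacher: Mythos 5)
Your proof is correct and essentially the same as the paper's: the converse direction (using $\Lambda_2\in\mathcal{U}(\Lambda_2)$ to test the hypothesis and then invoking Lemma \ref{inclusion equivalence (1)}) is exactly the paper's argument. The only cosmetic difference is in the forward direction, where you apply Lemma \ref{inclusion equivalence (1)} twice together with transitivity of $\subset$, while the paper unwinds the definition of $\mathcal{U}(\cdot)$ directly (every open set meeting $\Lambda_1$ also meets $\Lambda_2$); the two arguments are trivially equivalent.
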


\begin{proof}
First assume that $\Lambda_{1}\subset\Lambda_{2}$ and let $\Lambda
^{\prime}\in\mathcal{U}(\Lambda_{2}).$ Then $\Lambda^{\prime}\cap
V\neq\emptyset$ for each $V\in\mathcal{O}(S)$ with $V\cap\Lambda_{2}
\neq\emptyset.$ Therefore $\Lambda^{\prime}\cap V\neq\emptyset$ for each
$V\in\mathcal{O}(S)$ with $V\cap\Lambda_{1}\neq\emptyset.$ Therefore
$\Lambda^{\prime}\in\mathcal{U}(\Lambda_{1}).$

Now assume that $\mathcal{U}(\Lambda_{2})\subset\mathcal{U}(\Lambda
_{1}).$ Obviously, $\Lambda_{2}\in\mathcal{U}(\Lambda_{2})$ and hence $\Lambda_{2}\in
\mathcal{U}(\Lambda_{1}).$ From Lemma \ref{inclusion equivalence (1)}, this
implies that $\Lambda_{1}\subset\Lambda_{2}.$
\end{proof}

\begin{lemma}
\label{equality}Assume that $f$ is a homeomorphism of $\mathcal{GL}(S)$ with
respect to the Thurston topology. If $\Lambda\in\mathcal{GL}(S),$ then $f(
\mathcal{U}(\Lambda))=\mathcal{U}(f(\Lambda)).$
\end{lemma}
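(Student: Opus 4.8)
The plan is to reduce the statement to the fact that $f$, being a homeomorphism for $\mathcal{T}$, must preserve the inclusion relation between laminations in both directions. Indeed, by Lemma \ref{inclusion equivalence (1)} we may rewrite $\mathcal{U}(\Lambda)=\{\Lambda'\in\mathcal{GL}(S):\Lambda\subset\Lambda'\}$ and $\mathcal{U}(f(\Lambda))=\{\Xi\in\mathcal{GL}(S):f(\Lambda)\subset\Xi\}$. Since $f$ is a bijection, it therefore suffices to establish, for all $\Lambda_1,\Lambda_2\in\mathcal{GL}(S)$, the equivalence
$$\Lambda_1\subset\Lambda_2\iff f(\Lambda_1)\subset f(\Lambda_2).$$
Granting this, for $\Lambda'\in\mathcal{U}(\Lambda)$ (i.e. $\Lambda\subset\Lambda'$) we get $f(\Lambda)\subset f(\Lambda')$, so $f(\Lambda')\in\mathcal{U}(f(\Lambda))$; and conversely every element of $\mathcal{U}(f(\Lambda))$ is of the form $f(\Lambda')$ with $f(\Lambda)\subset f(\Lambda')$, hence $\Lambda\subset\Lambda'$ and $\Lambda'\in\mathcal{U}(\Lambda)$. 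This yields $f(\mathcal{U}(\Lambda))=\mathcal{U}(f(\Lambda))$.

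The key step is to recognize that inclusion is a topological relation for $\mathcal{T}$, so that it is automatically respected by any $\mathcal{T}$-homeomorphism. Concretely, I would prove that for $\Lambda_1,\Lambda_2\in\mathcal{GL}(S)$ one has $\Lambda_1\subset\Lambda_2$ if and only if $\Lambda_1$ lies in the $\mathcal{T}$-closure $\overline{\{\Lambda_2\}}$ of the singleton $\{\Lambda_2\}$. A basic $\mathcal{T}$-neighborhood of $\Lambda_1$ is a finite intersection $\mathcal{U}_{V_1}\cap\cdots\cap\mathcal{U}_{V_n}$ with $\Lambda_1\cap V_i\neq\emptyset$ for each $i$, and $\Lambda_2$ lies in it precisely when $\Lambda_2\cap V_i\neq\emptyset$ for each $i$. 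Hence $\Lambda_1\in\overline{\{\Lambda_2\}}$ amounts to: every open $V$ meeting $\Lambda_1$ also meets $\Lambda_2$. If $\Lambda_1\subset\Lambda_2$ this is clear, while if $\Lambda_1\not\subset\Lambda_2$ one picks $x\in\Lambda_1\setminus\Lambda_2$ and, using that $\Lambda_2$ is compact (hence closed), a ball $B(x,\varepsilon)$ disjoint from $\Lambda_2$; then $V=B(x,\varepsilon)$ meets $\Lambda_1$ but not $\Lambda_2$, so $\Lambda_1\notin\overline{\{\Lambda_2\}}$. This is exactly the separation argument already used in the proof of Lemma \ref{inclusion equivalence (1)}, and it is the only point requiring anything beyond formal manipulation.

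Finally, since $f$ is a homeomorphism of $(\mathcal{GL}(S),\mathcal{T})$, it commutes with closure, so $f(\overline{\{\Lambda_2\}})=\overline{\{f(\Lambda_2)\}}$. Therefore $\Lambda_1\subset\Lambda_2\iff\Lambda_1\in\overline{\{\Lambda_2\}}\iff f(\Lambda_1)\in\overline{\{f(\Lambda_2)\}}\iff f(\Lambda_1)\subset f(\Lambda_2)$, which is the required equivalence; the reduction of the first paragraph then gives $f(\mathcal{U}(\Lambda))=\mathcal{U}(f(\Lambda))$. An equivalent formulation of the same idea, which avoids mentioning closures, is to observe that $\mathcal{U}(\Lambda)$ is precisely the intersection of all $\mathcal{T}$-open neighborhoods of $\Lambda$; a homeomorphism carries the family of open neighborhoods of $\Lambda$ bijectively onto that of $f(\Lambda)$ and commutes with intersections of sets, giving the conclusion directly. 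I do not expect any serious obstacle here: once inclusion is identified as the specialization order of $\mathcal{T}$, everything follows from $f$ being a homeomorphism.
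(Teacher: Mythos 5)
Your proof is correct, but it takes a genuinely different route from the paper's. The paper proves the single inclusion $\mathcal{U}(f(\Lambda))\subset f(\mathcal{U}(\Lambda))$ by a direct subbasis argument: since $f(\mathcal{U}_{V_0})$ is a $\mathcal{T}$-open set containing $f(\Lambda)$, it contains a finite intersection of subbasic neighborhoods $\mathcal{U}_{V_i}$ of $f(\Lambda)$, hence contains $\mathcal{U}(f(\Lambda))$; intersecting over all $V_0$ gives the inclusion, and the reverse inclusion then follows formally by applying the result to $f^{-1}$. You instead reduce the lemma, via Lemma \ref{inclusion equivalence (1)}, to the two-sided equivalence $\Lambda_1\subset\Lambda_2\iff f(\Lambda_1)\subset f(\Lambda_2)$ --- which is precisely the paper's Corollary \ref{cor:inclusion}, there deduced \emph{from} the present lemma --- and you prove this equivalence independently by identifying inclusion with the specialization order of $\mathcal{T}$: $\Lambda_1\subset\Lambda_2$ if and only if $\Lambda_1\in\overline{\{\Lambda_2\}}$, a relation preserved by any homeomorphism since $f(\overline{\{\Lambda_2\}})=\overline{\{f(\Lambda_2)\}}$. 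There is no circularity: your proof of the specialization characterization only repeats the compactness/separation argument already used in Lemma \ref{inclusion equivalence (1)} and never invokes Corollary \ref{cor:inclusion} or the lemma being proved. What your route buys is conceptual clarity: inclusion is exposed as a purely topological relation, so the chain Lemma \ref{equality}--Corollary \ref{cor:inclusion} collapses to a single observation; the paper's argument, by contrast, stays entirely at the level of subbasis manipulations. Your closing alternative --- that $\mathcal{U}(\Lambda)$ equals the intersection of \emph{all} $\mathcal{T}$-open neighborhoods of $\Lambda$, a family which $f$ carries bijectively onto the corresponding family for $f(\Lambda)$, and bijections commute with arbitrary intersections --- is in fact the cleanest version of the paper's own proof: it eliminates the one-sided-inclusion-plus-symmetry device, which the paper needs only because its intersection is indexed by the subbasis $\{\mathcal{U}_V\}$, a family not manifestly preserved by $f$, rather than by all open neighborhoods, a family which is.
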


 \begin{proof}
 
 It suffices to show that $$ \mathcal{U}(f(\Lambda))\subset f(\mathcal{U}(\Lambda)) \ \forall \Lambda\in \mathcal{GL}(S).$$
   Indeed, this implies 
 \[\mathcal{U}(f^{-1}(\Lambda))\subset f^{-1}(\mathcal{U}(\Lambda)),\]
 which implies 
 \[f(\mathcal{U}(f^{-1}(\Lambda))\subset \mathcal{U}(\Lambda),\]
 which implies   
 \[f(\mathcal{U}(\Lambda_0)\subset \mathcal{U}(f(\Lambda_0),\]
  where $\Lambda_0=f^{-1}(\Lambda)$. Thus, we get $f(\mathcal{U}(\Lambda_0)\subset \mathcal{U}(f(\Lambda_0)$ for any $\Lambda_0$ in $\mathcal{GL}(S)$. 
 
 To prove  the assertion, we start by
 \[f(\mathcal{U}(\Lambda))=f(\cap\{\mathcal{U}_\Lambda : V\in \mathcal{O}_\Lambda (S)\})=\cap
 \{f(\mathcal{U}_V) : V\in \mathcal{O}_\Lambda(S)\}
 .\]
  Now let $V_0$ be an arbitrary element of $\mathcal{O}_\Lambda(S)$. Then, $\Lambda$ is in $\mathcal{U}_{V_{0}}$ and $f(\Lambda)$ is in $f(\mathcal{U}_{V_{0}})$.
 But  $f(\mathcal{U}_{V_{0}})\in\mathcal{T}$ (that is, $f(\mathcal{U}_{V_{0}})$ is an open set of Thurston's topology). 
 Therefore, there exist $V_1,\ldots, V_n\in \mathcal{O}_{f(\Lambda)}(S)$ such that \[\displaystyle \cap_{1\leq i\leq n}\mathcal{U}_{V_{i}}\subset f(\mathcal{U}_{V_{0}}).\]
 Therefore, we have 
\[\cap \{\mathcal{U}_V : V\in \mathcal{O}_{f(\Lambda)}(S)\}\subset f(\mathcal{U}_{V_{0}}),\]
 which implies that for every $V_0\in \mathcal{O}_\Lambda(S)$, we have \[\mathcal{U}(f(\Lambda))\subset f(\mathcal{U}_{V_{0}}).\]
 Finally, we obtain
 \[\mathcal{U}(f(\Lambda))\subset \cap \{f(\mathcal{U}_V) : V\in \mathcal{O}_\Lambda(S)\}\]
 which implies $\mathcal{U}(f(\Lambda))\subset f(\mathcal{U}(\Lambda))$.
  \end{proof}

From the above lemmata, we obtain the following:

\begin{corollary}\label{cor:inclusion}
\label{inclusion}Assume that $f$ is a homeomorphism of $\mathcal{GL}(S)$ with
respect to the Thurston topology and let $\Lambda_{1},\Lambda_{2}
\in\mathcal{GL}(S).$ Then $\Lambda_{1}\subset\Lambda_{2}$ implies that
$f(\Lambda_{1})\subset f(\Lambda_{2}).$
\end{corollary}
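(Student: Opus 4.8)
The plan is to reduce the set-theoretic inclusion of laminations to an inclusion between the associated subsets $\mathcal{U}(\cdot)$ of $\mathcal{GL}(S)$, to transport that inclusion through $f$, and finally to translate the resulting inclusion back into an inclusion of laminations. All three translations are already available: the dictionary between inclusions of laminations and inclusions of the sets $\mathcal{U}(\cdot)$ is Lemma \ref{inclusion equivalence (2)}, while the commutation of $f$ with the operator $\mathcal{U}$ is Lemma \ref{equality}.

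Concretely, I would start from the hypothesis $\Lambda_{1}\subset\Lambda_{2}$. By Lemma \ref{inclusion equivalence (2)} this is equivalent to the inclusion $\mathcal{U}(\Lambda_{2})\subset\mathcal{U}(\Lambda_{1})$ of subsets of $\mathcal{GL}(S)$. Since $f$ is a bijection of $\mathcal{GL}(S)$, it preserves inclusions, so applying it gives
\[ f(\mathcal{U}(\Lambda_{2}))\subset f(\mathcal{U}(\Lambda_{1})). \]
Lemma \ref{equality} rewrites each side as $f(\mathcal{U}(\Lambda_{i}))=\mathcal{U}(f(\Lambda_{i}))$ for $i=1,2$, whence
\[ \mathcal{U}(f(\Lambda_{2}))\subset\mathcal{U}(f(\Lambda_{1})). \]
A final application of Lemma \ref{inclusion equivalence (2)}, this time to the pair $f(\Lambda_{1}),f(\Lambda_{2})\in\mathcal{GL}(S)$, delivers exactly $f(\Lambda_{1})\subset f(\Lambda_{2})$, which is the asserted conclusion.

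The argument is purely formal once the preceding lemmas are in hand, so I do not expect any genuine obstacle at this stage: the real content has already been isolated in Lemma \ref{equality}, whose proof is the only place where the homeomorphism hypothesis on $f$ is actually used (through the openness of $f(\mathcal{U}_{V})$ and the subbasis structure of the Thurston topology). Here one exploits only that $f$ is a bijection intertwining $\mathcal{U}$ with itself. I would also note, although it is not needed for the statement as phrased, that running the same chain of equivalences with $f^{-1}$ in place of $f$ yields the reverse implication; thus $f$ in fact induces an order-isomorphism of $(\mathcal{GL}(S),\subset)$, which is presumably the form in which this corollary will be invoked later.
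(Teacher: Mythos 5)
Your proposal is correct and follows exactly the paper's own proof: Lemma \ref{inclusion equivalence (2)} to convert the inclusion $\Lambda_1\subset\Lambda_2$ into $\mathcal{U}(\Lambda_2)\subset\mathcal{U}(\Lambda_1)$, the bijectivity of $f$ to transport this inclusion, Lemma \ref{equality} to rewrite $f(\mathcal{U}(\Lambda_i))$ as $\mathcal{U}(f(\Lambda_i))$, and Lemma \ref{inclusion equivalence (2)} again to conclude. Your closing remark that $f$ induces an order-isomorphism of $(\mathcal{GL}(S),\subset)$ is also accurate and is implicitly how the paper uses the corollary (e.g.\ in Lemma \ref{maximal} and Lemma \ref{length}).
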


\begin{proof}
From Lemma \ref{inclusion equivalence (2)}, the inclusion $\Lambda_{1}
\subset\Lambda_{2}$ implies that $\mathcal{U}(\Lambda_{2})\subset
\mathcal{U}(\Lambda_{1}).$ Since $f$ is a bijection, we have $f(\mathcal{U}(\Lambda_{2}))\subset
f(\mathcal{U}(\Lambda_{1})).$ From Lemma \ref{equality} we get
$\mathcal{\ U}(f(\Lambda_{2}))\subset\mathcal{U}(f(\Lambda_{1}))$ which
implies again, by Lemma \ref{inclusion equivalence (2)}, that $f(\Lambda
_{1})\subset f(\Lambda_{2}).$
\end{proof}

\begin{lemma}
\label{maximal}Let $f$ be a homeomorphism of $\mathcal{GL}(S)$ with respect to
the Thurston topology. Then $f$ sends a maximal (respectively minimal) geodesic lamination of $S $ to
a maximal (respectively minimal) geodesic lamination of $S.$
\end{lemma}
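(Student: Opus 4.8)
The plan is to exploit Corollary \ref{inclusion}, which tells us that $f$ preserves the inclusion relation $\subset$ between laminations, and to characterize maximality and minimality purely in terms of this relation. A geodesic lamination $\Lambda$ is \emph{maximal} precisely when it is not a proper sublamination of any other element of $\mathcal{GL}(S)$; equivalently, there is no $\Lambda'\in\mathcal{GL}(S)$ with $\Lambda\subsetneqq\Lambda'$. Likewise, $\Lambda$ is \emph{minimal} in the set-theoretic sense used here exactly when there is no $\Lambda'\in\mathcal{GL}(S)$ with $\Lambda'\subsetneqq\Lambda$. Both of these are statements about the partial order $(\mathcal{GL}(S),\subset)$ alone, so the strategy is to show that $f$ is an order isomorphism of this poset and then observe that any order isomorphism carries maximal (resp.\ minimal) elements to maximal (resp.\ minimal) elements.

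First I would establish that $f$ is order-preserving in both directions. Corollary \ref{inclusion} gives one direction: $\Lambda_1\subset\Lambda_2\Rightarrow f(\Lambda_1)\subset f(\Lambda_2)$. Since $f^{-1}$ is also a homeomorphism of $\mathcal{GL}(S)$ for the Thurston topology, applying the same corollary to $f^{-1}$ yields $\Lambda_1\subset\Lambda_2\Rightarrow f^{-1}(\Lambda_1)\subset f^{-1}(\Lambda_2)$. Combining these with the bijectivity of $f$, I obtain the equivalence $\Lambda_1\subset\Lambda_2\Leftrightarrow f(\Lambda_1)\subset f(\Lambda_2)$, and in particular $f$ turns strict inclusions into strict inclusions (using injectivity to rule out equality). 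Thus $f$ is an isomorphism of the partially ordered set $(\mathcal{GL}(S),\subset)$.

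It then remains to transfer maximality and minimality through this order isomorphism, which is routine. Suppose $\Lambda$ is maximal and, for contradiction, that $f(\Lambda)$ is not, so that $f(\Lambda)\subsetneqq\Lambda'$ for some $\Lambda'\in\mathcal{GL}(S)$. Setting $\Lambda''=f^{-1}(\Lambda')$, the strict-inclusion equivalence gives $\Lambda\subsetneqq\Lambda''$, contradicting the maximality of $\Lambda$. Hence $f(\Lambda)$ is maximal. The argument for minimality is entirely symmetric, reversing the inclusions. I do not anticipate a genuine obstacle here: the only point requiring a little care is to confirm that the set-theoretic notions of maximal and minimal used in this paper depend only on the inclusion order (which they do, by the definitions recalled in \S\ref{s:def}), so that an order isomorphism automatically respects them; once that is noted, the proof is immediate from Corollary \ref{inclusion} applied to both $f$ and $f^{-1}$.
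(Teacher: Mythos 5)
Your proof is correct and follows essentially the same route as the paper: the paper also deduces the lemma directly from Corollary \ref{cor:inclusion}, applying it (implicitly) to $f^{-1}$ to pull a strict inclusion $f(\Lambda)\subsetneqq\Theta$ back to $\Lambda\subsetneqq f^{-1}(\Theta)$ and derive a contradiction with maximality, and symmetrically for minimality. Your packaging of this as an order isomorphism of $(\mathcal{GL}(S),\subset)$, with strictness justified by injectivity, is just a slightly more explicit write-up of the identical argument.
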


\begin{proof}
Let $\Lambda$ be a maximal geodesic lamination of $S.$ If $f(\Lambda)$ is not
maximal then there is a maximal geodesic lamination $\Theta$ such that $f(\Lambda
)\subsetneqq\Theta.$ Let $\Lambda^{\prime}=f^{-1}(\Theta).$ Then, by
Corollary \ref{inclusion}, we have $\Lambda\subsetneqq\Lambda^{\prime}$
which contradicts the maximality of $\Lambda$.

Likewise, let $\Lambda$ be a minimal geodesic lamination of $S$. If $f(\Lambda)$ is not minimal, then there exists a lamination $\Theta$ such that $\Theta \subsetneqq f(\Lambda)$. Let $\Lambda'= f^{-1}(\Theta)$. Then, from Corollary \ref{inclusion} again, we have 
$\Lambda' \subsetneqq \Lambda$, which contradicts the minimality of $\Lambda$.

\end{proof}

From Theorem \ref{structure} (III), every $\Lambda
\in\mathcal{GL}(S)$ has a finite number of sublaminations. Thus, we give the
following definition.

\begin{definition}
Let $\Lambda\in\mathcal{GL}(S).$ A \emph{chain of sublaminations} of $\Lambda$ is a
finite sequence $(\Lambda_{i}),$ $i=0,1,..,n$ of sublaminations of $\Lambda$
such that $\emptyset\neq\Lambda_{n}\subsetneqq\Lambda_{n-1}\subsetneqq
...\subsetneqq\Lambda_{1}\subsetneqq\Lambda_{0}=\Lambda.$ We denote such a chain by
$\mathcal{C}_{\Lambda}.$ The integer $n$ will be called the \emph{length} of
$\mathcal{C}_{\Lambda}$ and will be denoted by $l(\mathcal{C}_{\Lambda}).$

A chain of sublaminations $\mathcal{C}_{\Lambda}$ will be called \emph{maximal} if
its length is maximal among all chains of sublaminations of $\Lambda$.

The length of a maximal chain of sublaminations $\mathcal{C}_{\Lambda}$ of
$\Lambda$ depends only on $\Lambda.$ Therefore the number $l(\mathcal{C}
_{\Lambda})$ will be referred to as the length of $\Lambda$ and will be denoted by
$\mathrm{length}(\Lambda).$
\end{definition}

\begin{lemma}
\label{length} Let $f$ be a homeomorphism of $\mathcal{GL}(S)$ with respect to
the Thurston topology and let $\Lambda_{n}\subsetneqq\Lambda_{n-1}\subsetneqq
...\subsetneqq\Lambda_{1}\subsetneqq\Lambda_{0}=\Lambda$ be a maximal chain of
sublaminations of $\Lambda.$ Then $f(\Lambda_{n})\subsetneqq f(\Lambda
_{n-1})\subsetneqq...\subsetneqq f(\Lambda_{1})\subsetneqq f(\Lambda
_{0})=f(\Lambda)$ is a maximal chain of sublaminations of $f(\Lambda)$ and
$\mathrm{length}(\Lambda_{k})=\mathrm{length}(f(\Lambda_{k}))$ for each $k=0,1,..,n.$
\end{lemma}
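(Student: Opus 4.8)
The plan is to show that $f$ preserves strict inclusions and then use Corollary \ref{inclusion} to transport the chain to $f(\Lambda)$, after which an inverse argument gives maximality. First I would establish that $f$ carries the \emph{strict} inclusion $\Lambda_{k+1}\subsetneqq\Lambda_k$ to a strict inclusion $f(\Lambda_{k+1})\subsetneqq f(\Lambda_k)$: the non-strict inclusion $f(\Lambda_{k+1})\subset f(\Lambda_k)$ is immediate from Corollary \ref{inclusion}, and the inclusion cannot be an equality because $f$ is a bijection, so $f(\Lambda_{k+1})=f(\Lambda_k)$ would force $\Lambda_{k+1}=\Lambda_k$, contradicting strictness. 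Applying this to each consecutive pair yields that $f(\Lambda_n)\subsetneqq\cdots\subsetneqq f(\Lambda_0)=f(\Lambda)$ is a genuine chain of sublaminations of $f(\Lambda)$ of length $n$. (That each $f(\Lambda_k)$ really is a sublamination of $f(\Lambda)$ is again just Corollary \ref{inclusion} applied to $\Lambda_k\subset\Lambda$.)

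Next I would argue that this chain is in fact \emph{maximal} among chains of sublaminations of $f(\Lambda)$, i.e.\ that $\mathrm{length}(f(\Lambda))=n=\mathrm{length}(\Lambda)$. The clean way to do this is to run the preceding construction symmetrically for $f^{-1}$, which is also a Thurston homeomorphism and hence satisfies Corollary \ref{inclusion} as well. Concretely, suppose $\Theta_m\subsetneqq\cdots\subsetneqq\Theta_0=f(\Lambda)$ were a chain of sublaminations of $f(\Lambda)$ with $m>n$. Pulling it back by $f^{-1}$ and invoking the strict-inclusion-preservation property for $f^{-1}$ gives a chain $f^{-1}(\Theta_m)\subsetneqq\cdots\subsetneqq f^{-1}(\Theta_0)=\Lambda$ of sublaminations of $\Lambda$ of length $m>n$, contradicting the assumption that $\Lambda_n\subsetneqq\cdots\subsetneqq\Lambda_0$ was a maximal chain of $\Lambda$. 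Hence no longer chain of $f(\Lambda)$ exists, so $\mathrm{length}(f(\Lambda))=n$ and the image chain is maximal.

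Finally, the equality $\mathrm{length}(\Lambda_k)=\mathrm{length}(f(\Lambda_k))$ for each individual $k$ follows by applying the very same two steps with $\Lambda$ replaced by $\Lambda_k$: since $\Lambda_n\subsetneqq\cdots\subsetneqq\Lambda_k$ is a maximal chain of sublaminations of $\Lambda_k$ (a tail of a maximal chain is maximal for its top element, which I would note explicitly, or simply observe that $f$ and $f^{-1}$ preserve the length invariant of every lamination by the symmetric argument above), its image under $f$ is a maximal chain of sublaminations of $f(\Lambda_k)$, giving $\mathrm{length}(f(\Lambda_k))=\mathrm{length}(\Lambda_k)$.

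The main obstacle, modest as it is, is the maximality claim rather than the mere existence of the image chain: one must rule out the image chain being extendable inside $f(\Lambda)$, and the natural—indeed essentially only—tool for this is the symmetry between $f$ and $f^{-1}$, both of which are Thurston homeomorphisms to which Corollary \ref{inclusion} applies. The finiteness of the set of sublaminations guaranteed by Theorem \ref{structure}(III) is what makes $\mathrm{length}$ well defined in the first place, so no convergence or infinitary subtlety enters; the whole argument is a purely order-theoretic transport of the lattice of sublaminations under the bijection $f$.
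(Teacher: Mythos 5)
Your proof is correct and follows the same route as the paper: the paper's entire proof of this lemma is the single sentence ``The proof follows immediately from Corollary \ref{inclusion},'' and your argument is precisely the honest expansion of that claim---strictness preserved because $f$ is a bijection, and maximality (hence equality of lengths) obtained by running the same transport argument for $f^{-1}$, which is also a homeomorphism for the Thurston topology. Your explicit remark that a tail of a maximal chain is a maximal chain for its top element (via the splicing contradiction) is a detail the paper leaves implicit, but it is exactly what ``immediately'' must mean here.
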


\begin{proof}
The proof follows immediately from Corollary \ref{inclusion}.
\end{proof}

 We call a \emph{generalized pair of pants} a hyperbolic surface which is homeomorphic to a sphere with three holes, a hole being either a geodesic boundary component or a cusp. 
 
Now we can prove the following proposition.

\begin{proposition}
\label{finite to finite}
Let $f$ be a homeomorphism of $\mathcal{GL}(S)$ with
respect to the Thurston topology. Then,
\begin{enumerate}
\item \label{item:1} $f$ sends any maximal finite lamination which contains a collection of curves that decompose $S$ into generalized pair of pants to a maximal finite lamination that has the same property.

\item \label{item:2}   $f$ sends any laminations whose leaves are all closed to a lamination having the same property. Furthermore if such a
  lamination $\Lambda$ has $k$ components then $f(\Lambda)$ has also $k$ components.
  \item \label{item:3} $f$ sends finite laminations to finite laminations.
  \end{enumerate}
\end{proposition}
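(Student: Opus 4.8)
The plan is to characterize each of the three types of laminations appearing in the statement purely in terms of the partially ordered structure $(\mathcal{GL}(S),\subset)$, and then invoke Corollary \ref{inclusion} together with Lemma \ref{length} to transfer these characterizations across $f$. Since a homeomorphism $f$ of the Thurston topology preserves strict inclusion (Corollary \ref{inclusion}), preserves maximality and minimality (Lemma \ref{maximal}), and preserves the length function (Lemma \ref{length}), the whole strategy is to reduce each geometric property to an order-theoretic one.

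For item \eqref{item:1}, I would first observe that a maximal finite lamination $\Lambda$ whose closed leaves decompose $S$ into generalized pairs of pants is distinguished among all maximal laminations by a numerical invariant computable from $S$ alone, namely $\mathrm{length}(\Lambda)$. The idea is that any maximal lamination admits maximal chains of sublaminations whose length is governed by how the lamination decomposes the surface; a pants decomposition together with the spiraling leaves of a maximal finite lamination realizes the extremal configuration. I would argue that among all maximal laminations the ones of the stated type are exactly those attaining a specific value of $\mathrm{length}$, and since $f$ preserves both maximality (Lemma \ref{maximal}) and $\mathrm{length}$ (Lemma \ref{length}), the image $f(\Lambda)$ is again maximal with the same length, hence of the same type. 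The main subtlety here is to verify that the value of $\mathrm{length}$ really does single out precisely the pants-type maximal finite laminations and not some other maximal lamination; this requires a careful count of the minimal sublaminations and isolated spiraling leaves permitted by Theorem \ref{structure}(III).

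For item \eqref{item:2}, the key is a purely order-theoretic characterization of laminations in $\mathcal{CGL}(S)$: a lamination $\Lambda$ has all its leaves closed if and only if every minimal sublamination of $\Lambda$ is itself a single simple closed geodesic, equivalently if and only if $\mathrm{length}(\Lambda)$ equals the number of its components and each sublamination in a maximal chain is obtained by deleting one closed leaf. I would characterize a single simple closed geodesic as a minimal lamination of $\mathrm{length}$ zero, then characterize $\Lambda\in\mathcal{CGL}(S)$ as a lamination all of whose minimal sublaminations have this property; since by Lemma \ref{maximal} minimal sublaminations go to minimal sublaminations and $f$ preserves inclusions and $\mathrm{length}$, the image $f(\Lambda)$ lies in $\mathcal{CGL}(S)$ as well. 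The count of components is then exactly the number of minimal sublaminations of $\Lambda$, which equals the number of maximal proper sublaminations, an order-theoretic quantity preserved by $f$; this yields the preservation of $k$.

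For item \eqref{item:3}, the plan is to characterize finite laminations order-theoretically using the structure theorem. By Theorem \ref{structure}(III) a lamination is finite precisely when each of its minimal sublaminations is finite, and a finite minimal lamination is exactly a single simple closed geodesic, whereas an infinite minimal lamination has at least two boundary leaves by Lemma \ref{2 boundary leaves}. Thus I would seek to distinguish, in order-theoretic terms, a minimal lamination that is a simple closed geodesic from an infinite minimal one; the former has no proper sublaminations and additionally admits no lamination strictly between it and the full laminations containing it in the manner forced by an infinite minimal lamination's dense leaves and boundary structure. The hard part, and the main obstacle of the whole proposition, is precisely this last distinction: minimality alone does not separate a closed geodesic from an infinite minimal lamination, so I must find a finer order-theoretic invariant—likely involving the lengths of chains and the way a minimal lamination can be enlarged within maximal laminations—that detects finiteness. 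Once finiteness of minimal sublaminations is shown to be preserved, the finiteness of $\Lambda$ itself follows because $f$ preserves the decomposition of $\Lambda$ into its finitely many minimal sublaminations and isolated spiraling leaves, and I would conclude by checking that the isolated additional geodesics, being finite in number and spiraling onto finite minimal sublaminations, contribute only finitely many leaves to $f(\Lambda)$ as well.
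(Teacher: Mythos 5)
Your plan for item (\ref{item:1}) is essentially the paper's: the paper proves (Claims 3--5 of its proof) that a maximal finite lamination containing a generalized pants decomposition has $\mathrm{length}$ equal to $9g-9+3b$, while every other maximal lamination --- finite without a pants decomposition, or infinite --- has strictly smaller length; combined with Lemma \ref{maximal} and Lemma \ref{length} this gives (\ref{item:1}). You defer exactly this count, which is the real content, but the strategy is correct. The genuine gap is in items (\ref{item:2}) and (\ref{item:3}). You propose to characterize a simple closed geodesic as ``a minimal lamination of length zero,'' but this characterization is vacuous: \emph{every} minimal lamination, including an infinite one (for instance a minimal lamination filling a subsurface), has length zero, since it has no proper sublamination and hence admits only the trivial chain. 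Consequently the condition ``all minimal sublaminations of $\Lambda$ have length zero'' holds for every $\Lambda\in\mathcal{GL}(S)$ and distinguishes nothing. You acknowledge this very obstacle in item (\ref{item:3}) (``minimality alone does not separate a closed geodesic from an infinite minimal lamination''), but you never produce the finer invariant you say is needed, so neither (\ref{item:2}) nor (\ref{item:3}) is actually established by your argument.

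The missing idea, which is how the paper resolves this, is that one should not look for an intrinsic order-theoretic characterization of closed geodesics at all; instead one detects finiteness by \emph{upward embedding} into the laminations already controlled by item (\ref{item:1}). Given $K$ all of whose leaves are closed, extend $K$ to a generalized pants decomposition $P$ and then to a maximal finite lamination $\Lambda_P\supset P$. By (\ref{item:1}), $f(\Lambda_P)$ is again a maximal finite lamination, and $f(K)\subset f(\Lambda_P)$ by Corollary \ref{cor:inclusion}, so $f(K)$ is finite. Since a minimal sublamination of a finite lamination is a single closed geodesic, the case of a single closed geodesic follows (its image is minimal and sits inside a finite lamination); the paper then handles $k+1$ components by induction together with the injectivity of $f$: if $f(K)$ contained an open leaf, all $k+1$ of the $k$-component sublaminations of $K$ would be forced to have the same image, which is impossible. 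Item (\ref{item:3}) then follows quickly: if $f(K)$ were infinite it would contain an infinite minimal sublamination $\Lambda_0$, whose preimage $f^{-1}(\Lambda_0)$ is a minimal sublamination of $K$ (Lemma \ref{maximal} and Corollary \ref{cor:inclusion} applied to $f^{-1}$), hence a single closed geodesic; but then $f(f^{-1}(\Lambda_0))=\Lambda_0$ would be a single closed geodesic by (\ref{item:2}), a contradiction. In short, the pants-type maximal finite laminations are the order-theoretically detectable class, and finiteness of everything else is inherited from them by monotonicity; this is the step your proposal lacks.
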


\begin{proof}
(\ref{item:1}) Let $\Lambda$ be a maximal geodesic lamination. From Theorem \ref{structure},
$S-\Lambda$ consists of finitely many open components. 

\textit{Claim 1: Let $U$ be a component of $S-\Lambda$. Then, the completion }$C(U)$\textit{\ of }
$U$\textit{\ is a hyperbolic surface which is isometric either to a hyperbolic ideal
triangle or to a surface of genus 0 with a cusp and an open geodesic as
boundary. The latter will be referred to as a cusped hyperbolic monogon ; it is obtained from a hyperbolic ideal triangle by gluing together two sides of this ideal triangle.}

\noindent \textit{Proof of Claim 1. }The surface $C(U)$ is a complete hyperbolic surface
of finite area. Therefore\textit{\ }$C(U)$ has finitely many boundary components which
are either closed geodesics or open geodesics. We may easily verify that if
$C(U)$ is not of the type described in the claim then the lamination $\Lambda$
would not be maximal because we could add open geodesics $l_{i}$
to $\Lambda$ and construct a lamination $\Lambda^{\prime}\supsetneqq\Lambda.$
This proves Claim 1.

Now let $\lambda_{1},...,\lambda_{n}$ be the leaves of $\Lambda$ which are boundary geodesics of the completion $C(U)$ of some component $U $ of
$S-\Lambda.$ The leaves $\lambda_{i}$ can be of two types:

(i)  an open geodesic of $S$ which is an isolated leaf of $\Lambda$;

(ii) an open geodesic of $S$ which is a leaf isolated from one side in $\Lambda$.

A geodesic of type (ii) appears as a boundary leaf of a minimal infinite sublamination
of $\Lambda.$ Among the $\{\lambda_{i}\}$, we may assume, without loss of generality,
that $\lambda_{1},...,\lambda_{k}$ are the isolated leaves of $\Lambda,$ for
$0\leq k\leq n.$ From Theorem \ref{structure}, it follows that if
$\Lambda$ is a finite maximal geodesic lamination then $k=n$ and that if $\Lambda$
is a maximal infinite geodesic lamination which is also minimal then $k=0.$

To the leaves $\lambda_{1},...,\lambda_{n}$ of $\Lambda$ we add the leaves
$c_{1},...,c_{m}$ of $\Lambda$ which are simple closed geodesics, if there
exist any. Let $A_{\Lambda}=\{\lambda_{1},..,\lambda_{n},c_{1},..,c_{m}\}$. 

We define a \emph{generating set} for a geodesic lamination $\Lambda$ to be a set $\mathcal{A}=\{\mu_1,\ldots,\mu_k\}$ of leaves of $\Lambda$ such that the union of the closures of the leaves that belong to $\mathcal{A}$ is the lamination $\Lambda$. It follows from Theorem \ref{structure} that every geodesic lamination on $S$ has a finite generating set.

Our terminology is motivated by Claim 2 that follows now.

\textit{\noindent Claim 2: The set }$A_{\Lambda}=\{\lambda
_{1},..,\lambda_{n},c_{1},..,c_{m}\}$\textit{\ is a generating set for }$\Lambda
$\textit{\ and any proper sublamination of }$\Lambda$\textit{\ is the union of closures of leaves that belong to some proper subset of }$A_{\Lambda}.$

\noindent\textit{Proof of Claim 2}. The claim follows immediately from the definition of $A$ and from 
Theorem \ref{structure} (III).

It is well-known and easy to see, using an Euler characteristic count, that the maximum number of pairwise disjoint simple closed
geodesics in $S$ is equal to $3g-3+b$ and that these geodesics cut $S$ into $2g-2+b$ hyperbolic generalized pairs of pants. It is also easy to see that the maximum number of open geodesics $l_{i}$
of $S$ which decompose $S$ into hyperbolic ideal triangles is equal to $6g-6+3b$. 

\textit{\noindent Claim 3: If }$\Lambda$\textit{\ is a maximal 
finite geodesic lamination that contains a generalized pair of pants decomposition $P$ then a maximal chain } \textit{\ of
sublaminations of }$\Lambda$\textit{\ has length }$9g-9+3b.$

\noindent\noindent\textit{Proof of Claim 3. }The lamination $\Lambda$ contains  $3g-3+b$ simple closed geodesics, say $c_{1},..,c_{3g-3+b},$ which cut $S$
into generalized pairs of pants, and additional open isolated geodesics, say $\lambda_{1},...,\lambda_{r},$ such that
each $\lambda_{i}$ spirals about some $c_{j}$. We may
add to $\lambda_{i}$ open geodesics $\lambda_{k}^{\prime},$ which from one
direction abut to a cusp and from the other direction spiral about some
$c_{j},$ such that all the geodesics $\lambda_{i}$ and $\lambda_{i}^{\prime}$
decompose $S$ into hyperbolic ideal triangles. Since the total number of $\lambda_{i}$ and $\lambda_{i}^{\prime}$ is equal to $6g-6+3b$ we deduce that
the number $r$ of $\lambda_{i}$ is equal to $6g-6+2b.$ Therefore the set
$A_{\Lambda}=\{\lambda_{1},..,\lambda_{6g-6+2b},c_{1},..,c_{3g-3+b}\}$ is a
generating set of $\Lambda.$ From Claim 2, a chain of sublaminations
$\Lambda_{n}\subsetneqq\Lambda_{n-1}\subsetneqq...\subsetneqq\Lambda
_{1}\subsetneqq\Lambda_{0}=\Lambda$ is maximal if and only if for each
$k=1,..,n,$ $\Lambda_{k}-\Lambda_{k-1}$ is a single leaf belonging to
$A_{\Lambda}.$ Such a sequence $\Lambda_{i}$ can be constructed as follows:
From $\Lambda_{0}=\Lambda$ we first remove, one by one, all leaves
$\lambda_{i};$ after removing all these leaves we continue removing, one by
one, all leaves $c_{j}.$ Obviously $n=9g-9+3b$.

\noindent\textit{Claim 4:\ Let }$\Lambda$\textit{\ be a maximal finite 
geodesic lamination which does not contain a generalized pants decomposition. Then $\mathrm{length}(\Lambda) <9g-9+3b$.}

\noindent\textit{Proof of Claim 4}. Since $\Lambda$ does not contain a generalized pants decomposition it follows that if  $c_1,\ldots, c_k$ are the closed geodesics in $\Lambda$, then  $k$ is strictly smaller than $3g-3+b$. Now as in the proof of Claim 3, we may prove that $\Lambda$ contains additional open geodesics $\lambda_1,\ldots,\lambda_{6g-6+2b}$ which spiral about the $c_j$'s. Obviously, a maximal chain $\mathcal{C}_{\Lambda}$ of sublaminations of $\Lambda$ has length less than $9g-9+3b$.

\noindent\textit{Claim 5:\ Let }$\Lambda$\textit{\ be a maximal infinite 
geodesic lamination. Then $\mathrm{length}(\Lambda) <9g-9+3b$.}

Consider a maximal chain of sublaminations
of $\Lambda,$ say $\Lambda_{k}\subsetneqq\Lambda_{k-1}\subsetneqq
...\subsetneqq\Lambda_{1}\subsetneqq\Lambda_{0}=\Lambda.$ Let $A_{\Lambda
}=\{\lambda_{1},..,\lambda_{n},c_{1},..,c_{m}\}$ be a generating set of
$\Lambda$, where each $\lambda_i$ is an open geodesic and each $c_i$ is a closed geodesic, and we assume that the generating set  $A_{\Lambda}$ is \emph{minimal} in the sense that no proper subset of $A_{\Lambda}$ is a generating set of $\Lambda$. 

First, we prove that $m<3g-3+b$. We know that $m\leq3g-3+b.$ If
$m=3g-3+b$ then the closed geodesics cut $S$ into generalized
pairs of pants. This implies that a minimal sublamination, say $\Lambda
^{\prime},$ of $\Lambda$ with infinitely may leaves must be in the
interior of a generalized pair of pants. But it is easy to
see that such a sublamination $\Lambda^{\prime}$ does not exist. Therefore $m<3g-3+b.$

Second, we prove that $n\leq$ $6g-6+2b.$ By Theorem
\ref{structure}, the leaves $\lambda_{1},..,\lambda_{n}$ are either isolated
open leaves or boundary leaves. Since $\Lambda$ is maximal, for each component
$U$ of $S-\Lambda$ the completion $C(U)$ is either a hyperbolic ideal triangle
or a cusped hyperbolic monogon. The area of every such surface is
equal to $\pi$, therefore, from the Gauss-Bonnet theorem the number of
components $U$ is equal to $4g-4+2b.$ Now, each hyperbolic ideal triangle has
three sides and each cusped hyperbolic monogon has one side. On the
other hand the number of cusped hyperbolic monogons is equal to
$b.$ Therefore the total number of sides of $C(U)$ is $3(4g-4+2b)-2b=12g-12+4b.$ Now if a leaf $\lambda_{i}$ is isolated it belongs
exactly to two components $C(U).$ If a leaf $\lambda_{i} $ is not isolated then it belongs to a minimal sublamination of $\Lambda$, say $K_i$. From Lemma \ref{2 boundary leaves}, $K_i$ contains at least two boundary leaves and we take $\lambda_i$ to be one of them. Therefore, among all the sides of the given $C(U)$, at least two of them are boundary leaves of $K_i$. 
 This implies that $n\leq\frac{12g-12+4b}{2}=6g-6+2b.$ Assume that
$\lambda_{1},...,\lambda_{r}\in A_{\Lambda}$, $0\leq r<n$ are isolated leaves
and $\lambda_{r+1},...,\lambda_{n}\in A_{\Lambda}$ are isolated from one side.
Every $\lambda_{s}$ with $s>r$ belongs to a unique minimal infinite
sublamination $\Lambda_{s}^{\prime}$ of $\Lambda.$

Now the maximal chain of sublaminations $\Lambda_{k}\subsetneqq\Lambda_{k-1}\subsetneqq
...\subsetneqq\Lambda_{1}\subsetneqq\Lambda_{0}=\Lambda$ of
$\Lambda$ is constructed as follows: We pass from $\Lambda_{i-1}$ to
$\Lambda_{i}$ by removing all the isolated open geodesics
$\lambda_{i}\in A_{\Lambda}$ with $i\leq r$, then  every minimal infinite
sublamination $\Lambda_{t}^{\prime}$ of $\Lambda$ and every closed
geodesic $c_{j}\in A_{\Lambda}.$ Obviously the length of $\Lambda$ is less
than $9g-9+3b.$ This finishes the proof of Claim 5.

Now, from Lemma \ref{length} the homeomorphism $f$ preserves the length of a
maximal lamination $\Lambda$ and this finishes the proof of (\ref{item:1}).

Now we prove (\ref{item:2}). First, we claim that if $K$ is a geodesic lamination consisting of $k$ closed geodesics then $f(K)$ is a finite lamination. To prove this, we consider a maximal finite geodesic lamination $\Lambda$ containing a generalized pair of pants  decomposition $P$ with $K\subset P$. By  (\ref{item:1}), $f(\Lambda)$ is a maximal finite lamination and thus our claim follows. From this claim and Lemma \ref{length}, it follows that if $K_1=\{c\}$ is a geodesic lamination consisting of a single closed geodesic then the image of $K_1$ by $f$ consists of a single closed geodesic. 

Now we have proved Statement (\ref{item:2}) for $n=1$ and we proceed by induction. We assume that the statement is true for all $n\leq k$ and we prove it for $n=k+1$.

Assume that $K$ consists of $k+1$ closed geodesics. If our statement were not true for $n=k+1$, then from our inductive assumption and Lemma \ref{length}, $f(K)$ would consist of $k$ closed geodesics, say $c_1,\cdots, c_k$, plus one open geodesic. Then, any sublamination of $K$ consisting of $k$ geodesics would be sent to $\{c_1,\cdots, c_k\}$. But this is impossible since $f$ is a homeomorphism. This completes the induction and the proof of (\ref{item:2}).

 Now we prove (\ref{item:3}). Let $K$ be a finite lamination. Obviously, a minimal sublamination of $K$ consists of a single closed geodesic. Assume that $f(K)$ is not finite. Then there is an infinite minimal sublamination $\Lambda_0$ of $f(K)$. From Lemma \ref{maximal}, $\Lambda_0$ is the image of a minimal sublamination $K_0$ of $K$, via $f$, i.e. $f(K_0)=\Lambda_0$. But from   (\ref{item:2}), $f(K_0)$ consists of a single closed geodesic. Therefore we have a contradiction, which proves  (\ref{item:3}). 
\end{proof}

\bigskip\bigskip

\section{Proof of Theorem \ref{main theorem}} \label{s:proof}

We first prove that if $f:\mathcal{GL}(S)\rightarrow\mathcal{GL}
(S)\mathcal{\ }$is a homeomorphism with respect to the topology $\mathcal{T}$, then there is
a homeomorphism $h:S\rightarrow S$ such that $h_{\ast}=f.$

 We need the following lemma.

\begin{lemma}\label{lemma1}
\label{Haus-Th}Assume that $(\Lambda_{n})$ is a sequence of laminations that
converges to a lamination $\Lambda$ with respect to the Hausdorff topology.
Assume also that $(\Lambda_{n})$ converges to $\Lambda^{\prime} $ with respect
to the Thurston topology. Then $\Lambda^{\prime}\subset\Lambda.$
\end{lemma}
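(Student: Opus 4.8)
The plan is to argue by contradiction, using that the two hypotheses pull in opposite directions: Thurston convergence forces each $\Lambda_{n}$ to \emph{eventually meet} every open set that already meets $\Lambda^{\prime}$, whereas Hausdorff convergence forces each $\Lambda_{n}$ to \emph{lie uniformly close} to $\Lambda$. If $\Lambda^{\prime}$ contained a point away from $\Lambda$, then a small neighbourhood of that point would have to be met by the $\Lambda_{n}$ at the same time as the $\Lambda_{n}$ are trapped inside a thin neighbourhood of $\Lambda$, which is impossible.

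Concretely, suppose $\Lambda^{\prime}\not\subset\Lambda$ and pick $x\in\Lambda^{\prime}$ with $x\notin\Lambda$. Since $\Lambda$ is compact, and hence closed, $\delta:=d(x,\Lambda)>0$; put $\varepsilon:=\delta/3$ and $V:=B(x,\varepsilon)$. The only fact I need about Thurston convergence is the trivial one that each subbasic set $\mathcal{U}_{V}$ is itself open in $\mathcal{T}$: since $x\in\Lambda^{\prime}\cap V$ we have $\Lambda^{\prime}\in\mathcal{U}_{V}$, so convergence $\Lambda_{n}\to\Lambda^{\prime}$ in $\mathcal{T}$ gives $\Lambda_{n}\in\mathcal{U}_{V}$ for all large $n$, i.e. there is a point $y_{n}\in\Lambda_{n}$ with $d(x,y_{n})<\varepsilon$.

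Simultaneously, Hausdorff convergence gives $d_{H}(\Lambda_{n},\Lambda)<\varepsilon$ for all large $n$, hence $\Lambda_{n}\subset N_{\varepsilon}(\Lambda)$, so for such $n$ there is $z\in\Lambda$ with $d(y_{n},z)<\varepsilon$. Taking $n$ large enough that both conclusions hold and applying the triangle inequality yields $d(x,\Lambda)\leq d(x,z)\leq d(x,y_{n})+d(y_{n},z)<2\varepsilon<\delta=d(x,\Lambda)$, a contradiction. Therefore $\Lambda^{\prime}\subset\Lambda$. I anticipate no real obstacle here; the one point that deserves care is choosing $\varepsilon$ strictly smaller than $d(x,\Lambda)/2$ so that a point of $\Lambda_{n}$ near $x$ cannot also be $\varepsilon$-close to $\Lambda$, together with reading convergence in the non-Hausdorff topology $\mathcal{T}$ correctly, namely as \emph{being eventually inside every open neighbourhood} rather than through a limiting distance.
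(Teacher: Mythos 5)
Your proof is correct and follows essentially the same argument as the paper: pick $x\in\Lambda'\setminus\Lambda$, use Thurston convergence to force points of $\Lambda_{n}$ into a small ball around $x$, and use Hausdorff convergence to show $\Lambda_{n}$ eventually avoids that ball, yielding a contradiction. The only cosmetic difference is that you phrase the clash via the triangle inequality with explicit points $y_{n},z$, whereas the paper phrases it as the ball $B(x,\varepsilon)$ being simultaneously met and missed by $\Lambda_{n}$.
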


\begin{proof}
If $\Lambda^{\prime}$ is not contained in $\Lambda$ then there exists a point
$x\in\Lambda^{\prime}-\Lambda.$ Therefore there exists an open ball
$B(x,2\varepsilon)$ in $S$ of center $x$ and radius $2\varepsilon$ such that
$B(x,2\varepsilon)\cap\Lambda=\emptyset.$ We also have $\Lambda^{\prime
}\in\mathcal{U}_{B(x,\varepsilon)}.$

Now, since $\Lambda_{n}\rightarrow\Lambda$ with respect to the Hausdorff
topology we deduce that there is $n_{0}\in\mathbb{N}$ such that the following holds:

\begin{equation} \label{ast}
\Lambda_{n}\cap B(x,\varepsilon)=\emptyset \ \forall n\geq n_{0}.
\end{equation}
On the other hand, $\Lambda_{n}\rightarrow\Lambda^{\prime}$ with respect to
the Thurston topology. This implies that there exists $n_{1}\in\mathbb{N} $
such that $\Lambda_{n}\in\mathcal{U}_{B(x,\varepsilon)}$ for $n\geq n_{1}$ and
hence $\Lambda_{n}\cap B(x,\varepsilon)\neq\emptyset$ for $n\geq n_{1}.$ This
gives a contradiction to the relation (\ref{ast}) which implies that
$\Lambda^{\prime}\subset\Lambda.$
\end{proof}

Now note that there is a natural identification between the
subset $\mathcal{CGL}(S)$ and the complex of curves $\mathcal{C}(S)$ of $S.$
Therefore, from Proposition \ref{finite to finite}, $f$ induces an automorphism on $\mathcal{C} 
(S).$ From \cite{Ivanov}, \cite{Kork} and \cite{Luo} and under the hypothesis of Theorem \ref{main theorem}, we obtain
a homeomorphism $h:S\rightarrow S$ such that $h_{\ast}=f$ on $\mathcal{CGL} 
(S).$  

We now prove
that $h_{\ast}=f$ on $\mathcal{FGL}(S).$ Composing $f$ with $(h_{\ast})^{-1},$ if necessary, it suffices to assume
that $f=\mathrm{id}$ on $\mathcal{CGL}(S)$ and prove that $f=\mathrm{id}$ on
$\mathcal{FGL}(S).$ To do it, we first prove two lemmas:

\begin{lemma}  \label{lemma2} Let $\Lambda =\{\gamma ,\gamma _{1},\gamma _{2}\}$\ be a
geodesic lamination consisting of two disjoint simple closed geodesics $
\gamma _{1},\gamma _{2}$\ and one open geodesic $\gamma $\ spiraling in one
direction about $\gamma _{1}$\ and in the other direction about $\gamma
_{2}. $\ Then $f(\Lambda )=\Lambda .$
\end{lemma}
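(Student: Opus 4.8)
The plan is to exploit the interaction between the two topologies recorded in Lemma~\ref{Haus-Th}, together with the standing reduction hypothesis that $f=\mathrm{id}$ on $\mathcal{CGL}(S)$. The idea is to realize $\Lambda$ as a Hausdorff limit of simple closed geodesics, each of which is fixed by $f$, and then to read off from Lemma~\ref{Haus-Th} that $f(\Lambda)\subset\Lambda$. Since $\Lambda$ has only three proper sublaminations and all of them lie in $\mathcal{CGL}(S)$, injectivity of $f$ will then force $f(\Lambda)=\Lambda$.

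First I would construct a sequence $(c_n)$ of simple closed geodesics with $c_n\to\Lambda$ in the Hausdorff topology $\mathcal{T}_{H}$. Geometrically, since $\gamma$ spirals onto $\gamma_1$ at one end and onto $\gamma_2$ at the other, one takes the curve that winds $n$ times in a thin annular neighborhood of $\gamma_1$, runs along a band following $\gamma$ to a thin annular neighborhood of $\gamma_2$, winds $n$ times there, and returns along a parallel copy of the band; for $n$ large its geodesic representative $c_n$ is simple, and as $n\to\infty$ it fills tighter and tighter neighborhoods of $\gamma_1$, of $\gamma_2$, and of the spiraling tails of $\gamma$, so that $d_H(c_n,\Lambda)\to 0$. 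Equivalently one may produce such $c_n$ as simple closed curves carried by, and running many times around every branch of, a train track neighborhood of $\Lambda$. This is the main technical point, and the only place where genuine hyperbolic geometry enters; everything else is soft.

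Granting the $c_n$, the argument closes quickly. By Lemma~\ref{weaker topology} we have $\mathcal{T}\subset\mathcal{T}_{H}$, so $c_n\to\Lambda$ in $\mathcal{T}_{H}$ implies $c_n\to\Lambda$ in the Thurston topology $\mathcal{T}$. Each $c_n$ lies in $\mathcal{CGL}(S)$, hence $f(c_n)=c_n$; since $f$ is continuous for $\mathcal{T}$, we get $c_n=f(c_n)\to f(\Lambda)$ in $\mathcal{T}$. Thus the single sequence $(c_n)$ converges to $\Lambda$ for $\mathcal{T}_{H}$ and to $f(\Lambda)$ for $\mathcal{T}$, and Lemma~\ref{Haus-Th} yields $f(\Lambda)\subset\Lambda$.

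Finally I would identify $f(\Lambda)$. The lamination $f(\Lambda)$ is a sublamination of $\Lambda=\{\gamma,\gamma_1,\gamma_2\}$; since the closure of $\gamma$ is all of $\Lambda$, any sublamination containing $\gamma$ equals $\Lambda$, so the only sublaminations are $\{\gamma_1\}$, $\{\gamma_2\}$, $\{\gamma_1,\gamma_2\}$ and $\Lambda$ itself. The first three consist entirely of closed geodesics, hence belong to $\mathcal{CGL}(S)$ and are fixed by $f$; as $f$ is injective and $\Lambda$ differs from each of them, $f(\Lambda)$ can equal none of them. Therefore $f(\Lambda)=\Lambda$, as claimed.
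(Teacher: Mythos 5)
There is a genuine gap, and it is located exactly at the step you call ``the main technical point'': no sequence of simple closed geodesics can converge to $\Lambda=\{\gamma,\gamma_1,\gamma_2\}$ in the Hausdorff metric, so the sequence $(c_n)$ you need does not exist. The obstruction is the spiraling. Take a train track $\tau$ obtained by collapsing a small fibered neighborhood of $\Lambda$: it consists of two loops (around $\gamma_1$ and $\gamma_2$) and one connecting branch $b$, whose ends attach to the loops tangentially, in the direction prescribed by the spiraling of $\gamma$. Any geodesic lamination Hausdorff-close to $\Lambda$ is carried by $\tau$, and a carried simple closed curve gives nonnegative integer weights satisfying the switch conditions; but at the switch on the $\gamma_1$-loop the condition reads $w(e_1)=w(e_1)+w(b)$, forcing $w(b)=0$. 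Hence every simple closed geodesic close to $\Lambda$ is just $\gamma_1$ or $\gamma_2$ and never enters the band, so it cannot come close to the points of $\gamma$ in the middle of the band. Concretely, in your construction the problem is the ``return along a parallel copy of the band'': when you pass to the geodesic representative of $\gamma_1^n w\gamma_2^n w^{-1}$, the winding around each annulus is monotone, so the two strands crossing the band must spiral onto $\gamma_1$ (and onto $\gamma_2$) with \emph{opposite} orientations. The Hausdorff limit of your $c_n$ is therefore not $\Lambda$ but a strictly larger lamination $L=\{\gamma_1,\gamma_2,\gamma,g\}$ containing a fourth leaf $g$ running alongside $\gamma$ with reversed spiraling.

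The soft part of your argument (Lemma~\ref{weaker topology}, continuity of $f$, Lemma~\ref{Haus-Th}, and the count of sublaminations) is fine, but after the repair it only yields $f(\Lambda)\subset L$, and then, using $\{\gamma_1,\gamma_2\}\subset f(\Lambda)$ and preservation of length (Corollary~\ref{cor:inclusion}, Lemma~\ref{length}), that $f(\Lambda)$ is either $\{\gamma,\gamma_1,\gamma_2\}$ or $\{g,\gamma_1,\gamma_2\}$. Ruling out the second possibility is the actual content of the lemma, and your proposal has no mechanism for it. This is precisely why the paper's proof is structured the way it is: it embeds $\Lambda$ in a \emph{maximal} finite lamination $\Lambda_P$ subject to condition (3) --- leaves spiraling about a curve of $P$ from different sides induce opposite orientations --- which is exactly the condition under which a finite lamination \emph{is} a Hausdorff limit of simple closed geodesics; it proves $f(\Lambda_P)=\Lambda_P$ by the Hausdorff/Thurston interplay you propose (valid there), and then spends the rest of the proof (the surface $Q$, the auxiliary curves $\gamma_0$, $\beta_1$, $\alpha$, Claims 1 and 2) distinguishing $\gamma$ from the competing leaves of $\Lambda_P$ inside $Q$. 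In short: your scheme works for laminations approximable by simple closed curves, but $\Lambda$ itself is not one of them, and bridging that gap is essentially the paper's proof.
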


\begin{proof}
We take a generalized pair of pants decomposition $P$ and a maximal finite
geodesic lamination $\Lambda _{P}$ such that

(1) $\Lambda \subset \Lambda _{P};$

(2) for each component $R$ of $S-P$ and every pair $\lambda _{1}\neq \lambda
_{2}$ of boundary geodesics of $R$ there is a leaf $\lambda $ of $\Lambda
_{P}$ contained in $R$ which spirals in one direction about $\lambda _{1}$
and in the other direction about $\lambda _{2};$

(3) for every component $\gamma $ of $P,$ the leaves of $\Lambda _{P}$
spiraling about $\gamma $ from different sides of $\gamma $ induce opposite
orientations on $\gamma .$

Such a lamination $\Lambda _{P}$ can be approximated in the Hausdorff
topology and hence in the Thurston topology by a sequence $C_{n}$ of simple
closed geodesics, where the $C_{n}$ are viewed as elements of $\mathcal{CGL}
(S).$ This implies that $f(\Lambda _{P})=\Lambda _{P}.$ Indeed, $
C_{n}\rightarrow \Lambda _{P}$ with respect to the Hausdorff topology and
hence with respect to the Thurston topology. Therefore $f(C_{n})\rightarrow
f(\Lambda _{P})$ with respect to the Thurston topology. This implies, since $
f$ is the identity on $\mathcal{CGL}(S),$ that $C_{n}\rightarrow f(\Lambda
_{P})$ with respect to the Thurston topology. From Lemma \ref{lemma1} we
deduce that $f(\Lambda _{P})\subset \Lambda _{P}.$ But from Lemma \ref{length} the laminations $\Lambda _{P}$ and $f(\Lambda _{P})$ have the same length.
Therefore $f(\Lambda _{P})$ cannot have a smaller number of leaves than $
\Lambda _{P}.$ Therefore $f(\Lambda _{P})=\Lambda _{P}.$

Let us set now $f(\Lambda )=\Lambda ^{\prime }.$ From Proposition \ref{finite to finite} and Corollary \ref{cor:inclusion}, we deduce that $\Lambda ^{\prime }=\{\gamma
^{\prime },\gamma _{1},\gamma _{2}\}$ where $\gamma ^{\prime }$ spirals
about $\gamma _{1}$ and about $\gamma _{2}$. We shall prove that $\gamma
^{\prime }=\gamma .$

We can find a surface $Q\subset S$ with geodesic boundary containing the
lamination $\Lambda $ and which is the closure of a component of $S-P$ such
that $Q$ is one of the following types:

(i) a torus with one boundary component, with $\partial Q=\gamma_{1}$ or $
\partial Q=\gamma_{2};$

(ii) a pair of pants with no cusps, with $\gamma_{1}\cup\gamma_{2}\subset
\partial Q;$

(iii) a generalized pair of pants with a single cusp with $\partial Q=\gamma
_{1}\cup \gamma _{2}.$

First we show that $\gamma ^{\prime }\subset Q.$ Let $P_{\gamma }=P\cup
\{\gamma \}\subset \Lambda _{P}.$ Then, it is not hard to show, using
Corollary \ref{cor:inclusion} and Proposition \ref{finite to finite}, that $f(P_{\gamma
})=P\cup \gamma ^{\prime }.$ Therefore $P\cap \{\gamma ^{\prime
}\}=\emptyset .$ This implies that if $\gamma ^{\prime }$ is not contained
in $Q$ then it is contained in some generalized pair of pants $Q^{\prime }$
with $\gamma _{1},\gamma _{2}\in \partial Q^{\prime }$ and $\mathrm{Int}(Q)\cap
\mathrm{Int}(Q^{\prime })=\emptyset .$ Thus, we may find a simple closed geodesic $
\gamma _{0}$ of $S$ which intersects only one of the geodesics $\gamma ,$ $
\gamma ^{\prime }$ and $\gamma _{0}\cap \gamma _{i}=\emptyset $ for $i=1,2.$
To prove the last statement we need the assumption on the topological type
of $S$ that we made in the introduction. Assume without loss of generality
that $\gamma \cap \gamma _{0}=\emptyset $ and $\gamma ^{\prime }\cap \gamma
_{0}\neq \emptyset .$ From Corollary \ref{cor:inclusion} the lamination $\Lambda
_{1}=\{\gamma ,\gamma _{1},\gamma _{2},\gamma _{0}\}$ is sent to a
lamination $\Lambda _{1}^{\prime }$ which should contain the geodesics $
\gamma ^{\prime },\gamma _{1},\gamma _{2},\gamma _{0}.$ But this is
impossible since $\gamma ^{\prime }\cap \gamma _{0}\neq \emptyset $ and
hence $Q^{\prime }=Q.$

Now in the cases (ii) and (iii) above there is a unique leaf of $\Lambda
_{P} $ contained in $Q$, that spirals about $
\gamma _{1}$ and $\gamma _{2}$, namely the leaf $\gamma$. This implies that $\gamma ^{\prime }=\gamma $
since $f(\Lambda )=\Lambda ^{\prime }\subset \Lambda _{P}=f(\Lambda _{P})$
and $\gamma ^{\prime }\subset Q.$

Consider now the case (i) and without loss of generality we assume that $
\partial Q=\gamma _{2}.$ In this case, the lamination $\Lambda _{P}$ which
contains $\Lambda $ and which satisfies the requirements (1)-(3) above,
satisfies $\Lambda _{P}\cap Q=\{\gamma _{1},\gamma _{2},\gamma ,\delta
_{1},\delta _{2}\},$ where $\gamma $ is a geodesic spiraling about $\gamma
_{1}$ in both directions but from different sides of $\gamma _{1}$ and $
\delta _{1},$ $\delta _{2}$ are spiraling about $\gamma _{1}$ and $\gamma
_{2}.$ Denote by $\Lambda _{Q}$ the lamination $\Lambda _{P}\cap Q.$

Also, if $\gamma $ is an open simple geodesic of $S$, we denote by $
\overline{\gamma }$ the closure of $\gamma $ in $S.$ Obviously $\overline{
\gamma }$ is a geodesic lamination and consists of $\gamma $ and two or one
additional simple closed geodesics on which $\gamma $ is spiraling about. We note that we can talk about $f(\{
\overline{\gamma }\})$ and  not about $f(\{
 \gamma\})$ since $\overline{\gamma}$ is a lamination but $\gamma$ is not.  

\textit{Claim 1: }$f(\Lambda _{Q})=\Lambda _{Q}$\textit{\ and }$f(\{
\overline{\gamma }\})=\{\overline{\gamma }\}.$

\noindent \textit{\noindent Proof of Claim 1. }Since $f(\Lambda _{P})=\Lambda _{P}$\
and $f$\ is the identity on $\mathcal{CGL}(S)$\ we deduce that $f(\Lambda
_{Q})=\Lambda _{Q}.$\ On the other hand $f(\{\overline{\gamma }\})=\{
\overline{\gamma }\}$\ since $\gamma $\ is the unique leaf of $\Lambda _{Q}$
\ spiraling about $\gamma _{1}$\ in its both directions.

Let now $\beta _{1}$ be a simple open geodesic in $Q$ spiraling in both
directions about $\gamma _{1}$ from the same side of $\gamma _{1}$ and such
that $\beta _{1}\cap \delta _{1}=\emptyset .$

\textit{Claim 2: }$f(\{\overline{\beta _{1}}\})=\{\overline{\beta _{1}}\}.$

\noindent \textit{Proof of Claim 2}. Since $f(\{\overline{\gamma }\})=\{\overline{
\gamma }\},$\ it is easy to prove that either $f(\{\overline{\beta _{1}}
\})=\{\overline{\beta _{1}}\}$\ or $f(\{\overline{\beta _{1}}\})=\{\overline{
\beta _{2}}\},$\ where $\beta _{2}$ is the unique geodesic in $Q$\ spiraling
about $\gamma _{1}$\ in both directions and from the same side of $\gamma
_{1}$\ and such that $\beta _{2}\cap \gamma =\emptyset .$\ Now, we may find
an open geodesic $\alpha ,$\ as well as, a closed geodesic $\gamma
_{1}^{\prime },$\ such that:

$\gamma _{1}^{\prime }\cap Q=\emptyset ;$

$\alpha $\ is spiraling about $\gamma _{1}$\ and $\gamma _{1}^{\prime };$\

$\alpha \cap \beta _{1}=\emptyset $\ but $\alpha \cap \beta _{2}\neq
\emptyset .$

\noindent From (ii) we have that $f(\{\overline{\alpha }\})=\{\overline{
\alpha }\}$\ and therefore we deduce that $f(\{\overline{\beta _{1}}\})=\{
\overline{\beta _{1}}\}$ and Claim 2 is proved.

We are now able to prove that $f(\{\overline{\delta _{i}}\})=\{\overline{
\delta _{i}}\},$ $i=1,2.$ Indeed, considering the lamination $\Lambda _{Q}$
it is clear from Claim 1, that $f(\{\gamma _{1},\gamma _{2},\delta
_{1},\delta _{2}\})=\{\gamma _{1},\gamma _{2},\delta _{1},\delta _{2}\}.$
Now we consider the lamination $\Lambda _{0}=\{\gamma _{1},\gamma _{2},\beta
_{1},\delta _{1}\}.$ Since $f(\Lambda _{0})\subset Q,$ $f(\{\overline{\beta
_{1}}\})=\{\overline{\beta _{1}}\}$ and $\beta _{1}\cap \delta _{2}\neq
\emptyset $ we deduce easily that $f(\{\overline{\delta _{1}}\})=\{\overline{
\delta _{1}}\}$ and this completes the proof of Lemma \ref{lemma2}.
\end{proof}

\begin{lemma} \label{lemma3} Let $\Lambda =\{\gamma ,\gamma _{1}\}$  be
the geodesic lamination consisting of one simple closed geodesic $\gamma
_{1}$ and one open geodesic $\gamma $ spiraling in both
directions about $\gamma _{1}$. Then $f(\Lambda )=\Lambda$.
\end{lemma}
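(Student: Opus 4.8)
The plan is to follow the template of the proof of Lemma \ref{lemma2}: first determine the coarse shape of $f(\Lambda)$, and then identify its open leaf by an intersection argument against closed geodesics. First I would pin down the shape of $f(\Lambda)$. Since $\gamma$ spirals onto $\gamma_1$ at both ends, the closure of $\gamma$ is $\Lambda$ itself, the unique minimal sublamination of $\Lambda$ is $\{\gamma_1\}$, and $\{\gamma_1\}\subsetneqq\Lambda$ is the only chain of sublaminations; hence $\mathrm{length}(\Lambda)=1$. By Proposition \ref{finite to finite}(\ref{item:3}) the image $f(\Lambda)$ is a finite lamination, by Lemma \ref{length} it has length $1$, and since $f$ is the identity on $\mathcal{CGL}(S)$ and $\{\gamma_1\}\subset\Lambda$, Corollary \ref{cor:inclusion} gives $\{\gamma_1\}=f(\{\gamma_1\})\subset f(\Lambda)$. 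Because $f$ is a bijection preserving inclusions (Corollary \ref{cor:inclusion}) and carries minimal laminations to minimal laminations (Lemma \ref{maximal}), it preserves the number of minimal sublaminations, which equals $1$ here; so the single step from the minimal $\{\gamma_1\}$ to $f(\Lambda)$ adjoins exactly one extra \emph{open} leaf $\gamma'$. By the description of the elements of $\mathcal{FGL}(S)$, this leaf spirals at each end around a closed geodesic of the lamination, and the only such geodesic is $\gamma_1$. Therefore $f(\Lambda)=\{\gamma_1,\gamma'\}$ with $\gamma'$ spiraling onto $\gamma_1$ at both ends, and the goal reduces to proving $\gamma'=\gamma$.

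Next I would fix an ambient finite lamination, exactly as in the opening of the proof of Lemma \ref{lemma2}. I would choose a generalized pair of pants decomposition $P$ with $\gamma_1\in P$, together with a maximal finite lamination $\Lambda_P\supset\Lambda$ satisfying the conditions (1)--(3) of that proof and having $\gamma$ as one of its leaves. Approximating $\Lambda_P$ in the Hausdorff, hence the Thurston, topology by a sequence $C_n\in\mathcal{CGL}(S)$ of simple closed geodesics and using that $f$ is the identity on $\mathcal{CGL}(S)$, Lemma \ref{lemma1} yields $f(\Lambda_P)\subset\Lambda_P$, while Lemma \ref{length} upgrades this to $f(\Lambda_P)=\Lambda_P$. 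Consequently $f(\Lambda)\subset f(\Lambda_P)=\Lambda_P$, so $\gamma'$ must be one of the finitely many leaves of $\Lambda_P$ that spiral onto $\gamma_1$ from both ends.

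The crux is to single out $\gamma$ among these finitely many candidate leaves. Here I would split into the two geometric possibilities. If $\gamma$ approaches $\gamma_1$ from two \emph{different} sides, then a regular neighbourhood of $\overline{\gamma}=\gamma\cup\gamma_1$ is a one-holed torus in which $\gamma_1$ is a nonseparating interior curve; this is precisely the configuration treated in case (i) of Lemma \ref{lemma2}, and the identification $f(\{\overline{\gamma}\})=\{\overline{\gamma}\}$ established there applies directly (recall $\overline{\gamma}=\Lambda$ in this case). If instead $\gamma$ approaches $\gamma_1$ from the \emph{same} side, then $\overline{\gamma}$ fills a generalized pair of pants with $\gamma_1$ parallel to one boundary component, and the competing candidates play the role of the geodesics $\beta_1,\beta_2$ of Lemma \ref{lemma2}; they are separated by the intersection trick of Claim 2 of that proof. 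In each case the decisive move is to produce a simple closed geodesic $\gamma_0$, disjoint from $\gamma_1$ and from $\gamma$, that meets every competing candidate: then $\{\gamma,\gamma_1,\gamma_0\}$ is a geodesic lamination, so by Corollary \ref{cor:inclusion} together with $f(\{\gamma_0\})=\{\gamma_0\}$ the set $f(\{\gamma,\gamma_1,\gamma_0\})$ contains $\gamma'$ and $\gamma_0$ and is again a lamination, which is impossible unless $\gamma'$ is disjoint from $\gamma_0$, forcing $\gamma'=\gamma$.

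The hard part will be the construction of these separating geodesics $\gamma_0$ uniformly over both sub-cases and over all positions of $\gamma_1$. This is exactly the point where the hypothesis excluding the small surfaces is needed: on a sphere with few punctures or a torus with at most two punctures there is not enough room to separate, by a single simple closed geodesic disjoint from $\gamma$ and $\gamma_1$, the finitely many simple geodesics spiraling onto $\gamma_1$ at both ends, whereas on the surfaces allowed in Theorem \ref{main theorem} such a $\gamma_0$ can always be found. Once this construction is in place, the argument above gives $\gamma'=\gamma$ and hence $f(\Lambda)=\Lambda$.
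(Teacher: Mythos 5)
Your overall strategy mirrors the paper's (pin down $f(\Lambda)=\{\gamma_1,\gamma'\}$ with $\gamma'$ spiraling about $\gamma_1$ at both ends, fix an ambient approximable maximal finite lamination, then identify $\gamma'$ by disjointness arguments), and your first paragraph is correct. The genuine gap is in your case division. You split only into ``$\gamma$ approaches $\gamma_1$ from different sides'' versus ``from the same side,'' but the different-sides case itself splits according to whether the two ends of $\gamma$ induce \emph{opposite} orientations on $\gamma_1$ or the \emph{same} orientation --- the paper's cases (i$_{a}$) and (i$_{c}$) --- and your argument breaks down in the same-orientation subcase. There, your key step --- choosing a maximal finite lamination $\Lambda_P\supset\Lambda$ satisfying conditions (1)--(3) of the proof of Lemma \ref{lemma2} and having $\gamma$ as a leaf --- is impossible: condition (3) requires precisely that leaves spiraling about a curve of $P$ from different sides induce opposite orientations, so no such $\Lambda_P$ can contain a leaf of type (i$_{c}$) (and $\gamma_1$ is forced to be a curve of $P$, since the closed leaves of a maximal finite lamination containing a pants decomposition are exactly the pants curves). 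This is not a removable technicality: no Hausdorff limit of simple closed geodesics contains a type (i$_{c}$) leaf at all, because the highly winding strands of a simple closed geodesic in an annular neighborhood of $\gamma_1$ must cross the annulus, disjointness forces all of them to wind the same way, and in the limit this produces spiraling ends whose induced orientations on the two sides of $\gamma_1$ are opposite. Hence for such $\gamma$ the approximation mechanism ($C_n\to\Lambda_P$, $f(C_n)=C_n$, so $f(\Lambda_P)=\Lambda_P$) is unavailable, and Claim 1 of Lemma \ref{lemma2} --- which concerns a leaf of $\Lambda_Q\subset\Lambda_P$, hence automatically of type (i$_{a}$) --- does not ``apply directly'' as you assert.

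This is exactly why the paper's proof of Lemma \ref{lemma3} distinguishes the subcases (i$_{a}$), (i$_{b}$), (i$_{c}$), (ii), (iii) and treats (i$_{c}$) by a different device: it takes two auxiliary leaves $\delta_1,\delta_2$ spiraling about $\gamma_1$ and about $\gamma_2=\partial Q$, notes that $f(\{\overline{\delta_i}\})=\{\overline{\delta_i}\}$ by the already proved Lemma \ref{lemma2}, and then pins down $\gamma'$ as the unique geodesic in $Q$ spiraling about $\gamma_1$ in both directions and disjoint from $\gamma_2\cup\delta_1\cup\delta_2$ --- never placing $\gamma$ itself inside an approximable lamination. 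Your fallback intersection trick could probably be reworked along these lines (constrain $\gamma'$ using laminations $\{\gamma,\gamma_1\}\cup L$ where $L$ is already known to be fixed by $f$), but as written your proof asserts the wrong reduction for the subcase (i$_{c}$), and without an ambient $\Lambda_P$ containing $\gamma$ your ``finitely many competing candidates'' are not even well defined there.
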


\begin{proof}
First we find a surface $Q\subset S$ with geodesic boundary containing $
\gamma $ and $\gamma _{1}$ such that $Q$ is one of the following surfaces:

(i) a torus with one boundary component such that $\gamma _{1}$ is not the
boundary of $Q;$

(ii) a torus with one boundary component such that $\gamma _{1}=\partial Q;$

(iii) a generalized pair of pants with cusps such that $\gamma _{1}\subset
\partial Q.$

In Case (i) we distinguish the following three subcases:

(i$_{a})$ $\gamma $ spirals in both directions about $\gamma _{1}$ from
different sides of $\gamma _{1}$ and induces opposite orientation on $\gamma
_{1}$ from each side;

(i$_{b})$ $\gamma $ spirals in both directions about $\gamma _{1}$ from the
same side of $\gamma _{1};$

(i$_{c})$ $\gamma $ spirals in both directions about $\gamma _{1}$ from
different sides of $\gamma _{1}$ and induces the same orientation on $\gamma
_{1}$ from each side.

Then, as in the proof of Lemma \ref{lemma2}, we prove that $\gamma \subset
Q. $

Assume that $f(\Lambda )=\Lambda ^{\prime }.$ From Proposition  \ref{finite to finite}
and Corollary \ref{cor:inclusion}, we deduce that $\Lambda ^{\prime }=\{\gamma
^{\prime },\gamma _{1}\},$ where $\gamma ^{\prime }$ spirals in both
directions about $\gamma _{1}.$

In Case (iii) we proceed as in Lemma \ref{lemma2}. More precisely, we
consider a lamination $\Lambda _{P}$ which satisfies the requirements
(1)-(3) of Lemma \ref{lemma2} and can check that there is a unique leaf of $
\Lambda _{P}$ contained in $Q,$ namely the leaf $\gamma ,$ which spirals
about $\gamma _{1}.$ As in Lemma \ref{lemma2}, this implies that $\gamma
^{\prime }=\gamma .$ Therefore, $\Lambda =\Lambda ^{\prime }.$

In Case (ii) we consider a simple closed geodesic $\gamma _{2}$ in $Q$ such
that $\gamma _{2}\cap \gamma =\emptyset .$ We also consider a simple open
geodesic $\delta $ in $Q$ spiraling about $\gamma _{1}$ in one direction and
about $\gamma _{2}$ in the other direction and such that $\{\gamma
_{1},\gamma _{2},\gamma ,\delta \}$ is a geodesic lamination. Then, from
Lemma \ref{lemma2} we have that $f(\{\overline{\delta }\})=\{
\overline{\delta }\}$ which implies that $f(\{\overline{\gamma }\})=\{
\overline{\gamma }\}.$

Cases (i$_{a})$ and (i$_{b})$ have been studied respectively in Claims 1 and
2 above. In each case we proved that $f(\{\overline{\gamma }\})=\{
\overline{\gamma }\}.$

In Case (i$_{c})$ we consider the lamination $\{\gamma _{1},\gamma
_{2},\gamma ,\delta _{1},\delta _{2}\}$, where $\delta _{1}$ and $\delta
_{2} $ are geodesics in $Q$ spiraling both in one direction about $\gamma
_{1}$ and in the other direction about $\gamma _{2}=\partial Q.$ Now, it is
easy to see that $f(\{\overline{\gamma }\})$ is a lamination consisting of $
\gamma _{1}$ and one open leaf spiraling in both directions about $\gamma
_{1}.$ On the other hand, from Lemma \ref{lemma2} we have that $f(\{
\overline{\delta _{i}}\})=\{\overline{\delta _{i}}\},$ $i=1,2.$ Hence we
deduce that $f(\{\overline{\gamma }\})=\{\overline{\gamma }\}$ and the lemma
is proved.
\end{proof}

Now we can prove that $f=\mathrm{id}$ on $\mathcal{FGL}(S).$ Indeed, consider
$K\in\mathcal{FGL}(S)$ such that $f(K)=K^{\prime}\neq K.$ Then, without loss
of generality, we may assume that there is a leaf $\gamma$ of $K$ such that
$\gamma$ is not a leaf of $K^{\prime}.$ First we remark that $\gamma$ cannot
be a closed geodesic since, if $\gamma$ is closed, $f(\{\gamma\})=\{\gamma\}$ and by Corollary
\ref{cor:inclusion}  $f(\{\gamma\})=\{\gamma\}\subset f(K).$ Let $\gamma$ be an
open geodesic and assume that $\gamma$ spirals about two disjoint closed
geodesics $\gamma_{1},$ $\gamma_{2}.$ Consider the lamination $\{\gamma,\gamma_{1} 
,\gamma_{2}\}.$ By Lemma \ref{lemma2}, $f(\{\gamma,\gamma_{1},\gamma
_{2}\})=\{\gamma,\gamma_{1},\gamma_{2}\}$ and hence $\{\gamma,\gamma
_{1},\gamma_{2}\}\subset f(K)$ by Corollary \ref{cor:inclusion}. Therefore
$\gamma$ is a leaf of $f(K),$ a contradiction which implies that $f(K)=K.$ (The case where $\gamma$ spirals about a
single closed geodesics $\gamma_{1}$ in both directions is treated similarly
using Lemma \ref{lemma3}.) 

Finally we will show that $f=id$ on $\mathcal{GL}(S).$ Let $\Lambda
\in\mathcal{GL}(S).$ Then from Proposition \ref{dense} such a lamination $\Lambda$
can be approximated in the Hausdorff topology and hence in the Thurston
topology by a sequence $F_{n}$ of finite laminations. This implies that
$f(\Lambda)=\Lambda.$ Indeed, $F_{n}\rightarrow\Lambda$ with respect to the
Hausdorff and hence with respect to the Thurston topology. Therefore
$f(F_{n})=F_{n}\rightarrow f(\Lambda)$ with respect to the Thurston topology.
From Lemma \ref{Haus-Th} we deduce that $f(\Lambda)\subset\Lambda.$ Both
$\Lambda$ and $f(\Lambda)$ are infinite laminations. Therefore, from Theorem
\ref{structure} (III), $\Lambda$ and $f(\Lambda)$ consist of the disjoint union of a
finite number of infinite minimal sublaminations with a finite set of
isolated, open geodesics each end of which spiral onto a minimal
sublamination. Now, if $f(\Lambda)\neq\Lambda$ then $f(\Lambda) $ must consist
either of a smaller number of minimal sublamination of $\Lambda$ or/and of a
smaller number of isolated, open geodesics. This implies that \textrm{length} 
$(f(\Lambda))<\mathrm{length}(\Lambda)$ which contradicts Lemma \ref{length}.
Therefore $f(\Lambda)=\Lambda$. 

This proves that any homeomorphism $f:\mathcal{GL}(S)\rightarrow\mathcal{GL}
(S)$ with respect to the topology $\mathcal{T}$ is induced by a
a homeomorphism $h:S\rightarrow S$.

Thus, the natural homomorphism from the extended mapping class group $\Gamma^*(S)$ of $S$ to the group of  homeomorphisms of the space $\mathcal{GL}(S)$, equipped with the Thurston topology, is an isomorphism. To complete the proof of Theorem \ref{main theorem},  we need to show that if two elements of $\Gamma^*(S)$  have the same action on $\mathcal{GL}(S)$, then they are equal.  Under the hypothesis of the theorem, and if we furthermore exclude the case of a closed surface of genus 2, this result follows from the fact that the homomorphism from $\Gamma^*(S)$ to the automorphism group of the complex of curves $C(S)$ of $S$ is injective (see \cite{Ivanov}). It remains to consider the case where $S$ is a closed surface of genus 2.

Thus, we now assume that $S$ is a closed surface of genus 2. It is known that in this case if $g$ is an element of $\Gamma^*(S)$ that induces the identity map on $C(S)$, then $g$ is either the identity element or a hyperelliptic involution. It remains to show that a hyperelliptic involution does not induce the identity on $\mathcal{GL}(S)$. To see this, let $\iota$ denote the hyperelliptic involution. We note that there is a pair of pants decomposition $P$ of $S$ that is invariant by $\iota$. By using various ways in which an infinite geodesic in a pair of pants spirals along the boundary components, we can complete the three curves in $P$ into a maximal geodesic lamination $\Lambda$ whose image by $\iota$ is a lamination that is different from $\Lambda$. This shows that $\iota$ does not induce the identity map on $\mathcal{GL}(S)$. This completes the proof of Theorem \ref{main theorem}.

\end{document}